\documentclass[pdflatex,sn-mathphys-num]{sn-jnl}

\usepackage{comment}
\usepackage{graphicx}%
\usepackage{multirow}%
\usepackage{amsmath,amssymb,amsfonts}%
\usepackage{amsthm}%
\usepackage{mathrsfs}%
\usepackage[title]{appendix}%
\usepackage{xcolor}%
\usepackage{textcomp}%
\usepackage{manyfoot}%
\usepackage{booktabs}%
\usepackage{algorithm}%
\usepackage{algorithmicx}%
\usepackage{algpseudocode}%
\usepackage{listings}%

\theoremstyle{thmstyleone}%
\newtheorem{theorem}{Theorem}
\newtheorem{proposition}[theorem]{Proposition}%
\newtheorem{definition}{Definition}[section]
\newtheorem{example}{Example}[section]
\newtheorem{remark}{Remark}[section]
\newtheorem{corollary}{Corollary}[section]

\theoremstyle{thmstyletwo}%

\theoremstyle{thmstylethree}%

\begin{document}

\title[On the Cohomology of Cyclic Associative Algebras]{On the Cohomology of Cyclic Associative Algebras}

\author*[1,2,3]{\fnm{Hassan} \sur{Alhussein}}\email{hassanalhussein2014@gmail.com}

\affil*[1]{ \orgname{Siberian State University of Telecommunication and Informatics Science}, \orgaddress{ \city{Novosibirsk}, \country{Russia}}}

\affil[2]{ \orgname{Novosibirsk State University of Economics and Management}, \orgaddress{ \city{Novosibirsk}, \country{Russia}}}

\affil[3]{ \orgname{Novosibirsk State University}, \orgaddress{ \city{Novosibirsk}, \country{Russia}}}

\abstract{We introduce a cohomology theory for cyclic associative algebras, a subclass of shift associative algebras defined by the identity $(xy)z = x(yz) = y(zx)$. This cohomology, denoted $H^\bullet_{\mathrm{cyc}}(A, M)$, is a subtheory of Hochschild cohomology obtained by restricting to cochains that satisfy a cyclic compatibility condition derived from the defining identity. We prove that $H^2_{\mathrm{cyc}}(A, M)$ classifies cyclic associative extensions of $A$ by a cyclic bimodule $M$. The universal derivation and the module of differential forms $\Omega^\bullet_{\mathbb{F}}(A)$ are constructed, and $(\Omega^\bullet_{\mathbb{F}}(A), d)$ is shown to be the universal cyclic differential graded algebra over $A$. For trivial coefficients, we establish natural inclusions $HC^n(A) \hookrightarrow H^n_{\mathrm{cyc}}(A, \mathbb{F}) \hookrightarrow HH^n(A, \mathbb{F})$, placing our theory intermediate between Connes' cyclic cohomology and Hochschild cohomology.}

\keywords{Shift associative algebras, Cyclic associative algebras, Hochschild cohomology, Cyclic cohomology, Extensions of algebras, Deformations of algebras.}

\pacs[MSC Classification]{17A30, 17D25, 16E40, 18M60, 18G15.}

\maketitle

\section{Introduction}\label{sec1}

The study of non-associative algebras satisfying identities of associative type has a long and rich history. An algebra is said to be of \textit{associative type} if it, or its opposite, satisfies an identity of the form $(x_1x_2)x_3 = x_{\sigma(1)}(x_{\sigma(2)}x_{\sigma(3)})$ or $(x_1x_2)x_3 = (x_{\sigma(1)}x_{\sigma(2)})x_{\sigma(3)}$ for some permutation $\sigma \in \mathbb{S}_3$ \cite{Abdelwahab2025}. This broad class encompasses many important varieties, including associative algebras, commutative associative algebras, Novikov algebras \cite{Dotsenko2023, Sartayev2023}, bicommutative algebras \cite{Drensky2024, Ismailov2024}, perm algebras \cite{Kaygorodov2024, Mashurov2024}, and others. Among these, the variety defined by the identity $(xy)z = y(zx)$ was recently studied by Abdelwahab, Kaygorodov, and Sartayev \cite{Abdelwahab2025} under the name \textit{shift associative algebras}.

A particularly interesting subclass of shift associative algebras consists of those satisfying the stronger condition $(xy)z = x(yz) = y(zx)$. Following \cite{Abdelwahab2025}, we call these \textit{cyclic associative algebras}. This identity imposes a full cyclic symmetry on products of three elements. Examples range from commutative associative algebras to non-commutative nilpotent algebras appearing already in dimension three. The algebraic and geometric classifications of low-dimensional shift associative algebras were established in \cite{Abdelwahab2025}, where it was shown that the first non-associative shift associative algebra appears in dimension five.

The present paper is devoted to developing a cohomology theory for cyclic associative algebras. Cohomology theories for varieties of non-associative algebras play a crucial role in the study of extensions, deformations, and structural properties. For associative algebras, Hochschild cohomology \cite{Hochschild1945} classifies extensions and governs deformation theory. For commutative algebras, Harrison cohomology and André-Quillen cohomology provide analogous tools. For cyclic associative algebras, we introduce a cohomology theory that interpolates between Hochschild cohomology and Connes' cyclic cohomology \cite{Connes1985}.

Our first main result is the construction of a cochain complex $\mathbb{B}^\bullet(A, M)$ for a cyclic associative algebra $A$ with coefficients in a cyclic bimodule $M$. The cochains are multilinear maps $f: A^{\otimes (n+1)} \to M$ satisfying five compatibility conditions (\ref{6})-(\ref{10}) that encode the cyclic symmetry. The differential is the classical Hochschild coboundary, and we prove that it preserves these conditions. The resulting cohomology $H^\bullet_{\mathrm{cyc}}(A, M)$ is a subtheory of Hochschild cohomology.

We then establish the fundamental interpretation of the second cohomology group:

\begin{center}
\emph{$H^2_{\mathrm{cyc}}(A, M)$ is in canonical bijection with the set of equivalence classes of split abelian extensions of $A$ by $M$.}
\end{center}

This classification theorem (Theorem \ref{thm:classification}) justifies the cohomology as the correct tool for studying extensions in the variety of cyclic associative algebras.

To facilitate computations, we construct the enveloping algebra $E(A)$ of a cyclic associative algebra $A$ (Proposition \ref{prop:representation_module}) and show that representations of $A$ correspond to left modules over $E(A)$. The augmentation ideal $I = \ker(E(A) \to A)$ yields the module of differential forms $\Omega^1_{\mathbb{F}}(A) = I/I^2$, and the universal derivation $d: A \to \Omega^1_{\mathbb{F}}(A)$ satisfies the universality property $\operatorname{Der}(A, M) \cong \operatorname{Hom}_A(\Omega^1_{\mathbb{F}}(A), M)$ (Proposition \ref{prop:universal_derivation}). Higher differential forms $\Omega^n_{\mathbb{F}}(A)$ are defined via exterior powers, and we prove that $(\Omega^\bullet_{\mathbb{F}}(A), d)$ is the universal cyclic differential graded algebra over $A$ (Theorem \ref{thm:universal_cdg}).

A key conceptual contribution of this paper is the comparison with existing cohomology theories. For trivial coefficients $M = \mathbb{F}$, we establish natural inclusions:
\[
HC^n(A) \hookrightarrow H^n_{\mathrm{cyc}}(A, \mathbb{F}) \hookrightarrow HH^n(A, \mathbb{F}),
\]
where $HC^n(A)$ denotes Connes' cyclic cohomology and $HH^n(A, \mathbb{F})$ denotes Hochschild cohomology. Thus $H^\bullet_{\mathrm{cyc}}(A, \mathbb{F})$ occupies an intermediate position between these two classical theories (Proposition \ref{prop:comparison}).

Finally, we discuss the notion of smoothness for cyclic associative algebras. An algebra $A$ is called \textit{almost-free} if every abelian extension of $A$ admits a lifting homomorphism. We prove that $A$ is almost-free if and only if $\Omega^1_{\mathbb{F}}(A)$ is projective if and only if $A$ has cohomological dimension $\le 1$ with respect to $H^\bullet_{\mathrm{cyc}}(A, -)$ (Theorem \ref{thm:almost_free_equivalent}). This mirrors classical results in commutative algebra and non-commutative geometry.

The paper is organized as follows. Section 2 recalls the definition and basic properties of cyclic associative algebras and introduces cyclic bimodules. Section 3 constructs the enveloping algebra, the universal derivation, and the module of differential forms, culminating in the universal cyclic differential graded algebra. Section 4 defines the cohomology $H^\bullet_{\mathrm{cyc}}(A, M)$ and proves the extension classification theorem. Section 5 discusses conjectural smoothness and cohomological dimension. Section 6 compares our cohomology with Connes' cyclic cohomology. An appendix contains the detailed step-by-step proof that the Hochschild coboundary preserves the cyclic conditions.

\section{Preliminaries}

\subsection{Cyclic Associative Algebras}

In this subsection, we recall the definition and examples of cyclic associative algebras, which form a subclass of shift associative algebras. Throughout, let $\mathbb{F}$ be an algebraically closed field of characteristic different from $2$.

\begin{definition}\cite{Abdelwahab2025}
An algebra $(A, \cdot)$ is called \textbf{cyclic associative} if it satisfies the following identities for all $x, y, z \in A$:
\begin{align}
(xy)z &= x(yz), \label{eq:assoc}\\
(xy)z &= y(zx). \label{eq:cyclic}
\end{align}
Equivalently,
\[
(xy)z = x(yz) = y(zx).
\]
\end{definition}

\begin{remark}
Recall from \cite{Abdelwahab2025} that a \textbf{shift associative algebra} is defined by the single identity $(xy)z = y(zx)$. Every cyclic associative algebra is shift associative, but the converse is false in general. The first non-associative shift associative algebra appears in dimension $5$ \cite[Proposition 59]{Abdelwahab2025}, whereas non-associative cyclic associative algebras already appear in dimension $3$ (see Example \ref{ex:a1} below).
\end{remark}
\begin{example}\label{ex:comm}
 Every commutative associative algebra is cyclic associative. 
\end{example}

\begin{example}\label{ex:a1}
 Let $A$ have basis $\{e_1, e_2,e_3\}$ with multiplication:
\[
e_1e_2 = e_3,\quad e_2e_1 = -e_3,
\]
all other products zero. This algebra is cyclic associative. It is non-commutative. This is the smallest non-commutative cyclic associative algebra.
\end{example}
\begin{example}\label{prop:construction_f}
Let $A$ be a commutative associative algebra and let $M$ be a right $A$-module. Let $f: M \to A$ be an $A$-module morphism satisfying
\[
a \cdot f(b) = b \cdot f(a) \qquad \forall a, b \in M.
\]
Define a product $\ast$ on $M$ by
\[
a \ast b = a \cdot f(b) \in M.
\]
Then $(M, \ast)$ is a cyclic associative algebra.
\end{example}

\subsection{Cyclic Bimodules}

Let $A$ be a cyclic associative algebra. We now introduce the notion of a bimodule over $A$ that is compatible with the cyclic identity. The guiding principle is that the defining identities of $A$ should extend to any expression involving elements of $A$ and a module element $m \in M$, with $m$ placed in any position.

\begin{definition}\label{def:cyclic_bimodule}
Let $A$ be a cyclic associative algebra. A \textbf{cyclic $A$-bimodule} is a vector space $M$ equipped with two bilinear maps
\[
A \otimes M \to M,\quad (a,m) \mapsto a \cdot m,
\qquad
M \otimes A \to M,\quad (m,a) \mapsto m \cdot a,
\]
such that for all $x, y, z \in A$ and $m \in M$, the following identities hold:
\begin{align}
\text{(C1)}&\quad (m x) y = m (x y) = x (y m), \label{eq:cyc1}\\
\text{(C2)}&\quad (x m) y = x (m y) = m (y x), \label{eq:cyc2}\\
\text{(C3)}&\quad (x y) m = x (y m) = y (m x). \label{eq:cyc3}
\end{align}
\end{definition}
\subsection{Enveloping Algebra of a Cyclic Associative Algebra}

Let $A_l$ and $A_r$ be two copies of $A$, denoted by
\[
A_l = A\otimes \mathbb{F}, \qquad A_r = \mathbb{F}\otimes A.
\]
Consider the $\mathbb{F}$-module
\[
E(A) = (A \otimes A) \oplus A_l \oplus A_r.
\]

Define an associative product on $E(A)$ by the following rules, extended bilinearly:
\[
\begin{aligned}
(a \otimes b)(c \otimes d) &= a \otimes (b c d), \\
(a \otimes b) c_l &= a \otimes (b c), \\
c_l (a \otimes b) &= (c a) \otimes b, \\
(a \otimes b) c_r &= a \otimes (b c), \\
c_r (a \otimes b) &= a \otimes (c b), \\
a_l b_l &= (ab)_l, \\
a_r b_r &= (ba)_r, \\
a_l b_r &= (ab)_r, \\
a_r b_l &= (ba)_l,
\end{aligned}
\]
and all other products are zero.

\begin{proposition}\label{prop:representation_module}
Every representation $M$ of a cyclic associative algebra $A$ is equivalent to a left module over the enveloping algebra $E(A)$ via
\[
(a \otimes b) \cdot m = a \cdot (m \cdot b), \quad a_l \cdot m = a \cdot m, \quad a_r \cdot m = m \cdot a.
\]
Conversely, any left $E(A)$-module $M$ gives a representation of $A$ by $a \cdot m = a_l \cdot m$ and $m \cdot a = a_r \cdot m$.
\end{proposition}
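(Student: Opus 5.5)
The natural plan is to check the two directions separately. Throughout, "representation" means a cyclic $A$-bimodule in the sense of Definition \ref{def:cyclic_bimodule}. In each direction I would verify compatibility with the multiplication table of $E(A)$ on basis-type elements. The two constructions are clearly inverse on the generators $a_l, a_r$: given a bimodule, pass to the $E(A)$-action and restrict to $A_l, A_r$, and you recover the original actions. So the substantive content is two verifications.

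\emph{Bimodule $\Rightarrow$ $E(A)$-module.} Define the action as in the statement. Then check $(uv)\cdot m = u\cdot(v\cdot m)$ for $u,v$ ranging over the three summands $A\otimes A$, $A_l$, $A_r$, which gives nine cases.

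The cases involving only $A_l,A_r$ come straight from (C1)--(C3):
\begin{itemize}
\item $a_lb_l=(ab)_l$ is $(ab)m=a(bm)$, from (C3).
\item $a_rb_r=(ba)_r$ is $m(ba)=(mb)a$, from (C1).
\item $a_lb_r=(ab)_r$ requires $a(mb)=m(ab)$. (C2) gives $a(mb)=m(ba)$, so one also needs $m(ab)=m(ba)$. Comparing (C1) with the $m$-position versions should yield this, and I would check it explicitly.
\item $a_rb_l=(ba)_l$ requires $(bm)a=(ba)m$, which follows from (C2) and (C3).
\end{itemize}

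The cases with a tensor factor reduce similarly. For example, $(a\otimes b)c_l=a\otimes bc$ requires $a((cm)b)=a(m(bc))$. Since $(cm)b=m(bc)$ by (C2), this holds.

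\emph{The cases declared zero.} These are the products not listed, e.g.\ $c_l(a\otimes b)$ is listed, but $(a\otimes b)(\cdot)$ against certain summands, and $a_l\otimes$-type products, are not. For each such case I must show the corresponding composite action vanishes on $M$. This is where I expect the main obstacle: identities like $c_r\cdot((a\otimes b)\cdot m)=0$ do not follow from (C1)--(C3) in general. I would first try to rewrite the composite so that one of the stated product rules applies and see whether it matches. If a mismatch appears, the fix is to read the correspondence as an equivalence between cyclic bimodules and those $E(A)$-modules on which the extra relations hold, i.e.\ modules over the quotient of $E(A)$ making the action well defined.

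\emph{$E(A)$-module $\Rightarrow$ bimodule.} Set $am=a_l\cdot m$ and $ma=a_r\cdot m$. Each of (C1)--(C3) then translates into an equality of products in $E(A)$. For example, $(xy)m=x(ym)$ becomes $(xy)_l=x_ly_l$, which is the rule $a_lb_l=(ab)_l$. Likewise $y(mx)=(xy)m$ becomes $y_lx_r=(yx)_r$, which must be compared with $(xy)_l$. Some of these identities require relations in $E(A)$ such as $(yx)_r\cdot m=(xy)_l\cdot m$, which are not visible in the table. I would either derive them from the cyclic identities of $A$, or again quotient by them, so that the converse statement holds on the appropriate category of modules.
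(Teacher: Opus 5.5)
\textbf{There is a genuine gap, and it cannot be closed: the proposition is false as written, in both directions.}

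\emph{Forward direction.} The obstacle you anticipate is in the wrong place. The nine rules in the table already cover all nine pairs of summands $A\otimes A$, $A_l$, $A_r$, so ``all other products are zero'' is vacuous; for example, $c_r(a\otimes b)=a\otimes(cb)$ is listed. Every case involving a tensor factor does hold. The reason is that (C1)--(C3) say exactly that the square-zero extension $A\oplus M$ is cyclic associative, i.e.\ associative with $(A\oplus M)^2$ central. In such an algebra all products of four or more factors are fully symmetric, and both sides of each tensor case are $4$- or $5$-fold products there.

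The real failure is in the two mixed cases. From (C2) and (C3) one gets $a(mb)=m(ba)$, $(bm)a=m(ab)$ and $(ba)m=m(ba)$. Hence both $a_lb_r=(ab)_r$ and $a_rb_l=(ba)_l$ require $m(ab)=m(ba)$. Your claim that the $a_rb_l$ case follows from (C2) and (C3) is wrong, and the derivation you hope for in the $a_lb_r$ case does not exist. A counterexample: let $A$ be the span of the positive-degree monomials of the Grassmann algebra on $\xi_1,\xi_2,\xi_3$. It is associative with $A^2$ central, hence cyclic associative. Take $M=A$. Then
\[
((\xi_1)_l(\xi_2)_r)\cdot\xi_3=\xi_3\xi_1\xi_2=\xi_1\xi_2\xi_3,
\qquad
(\xi_1)_l\cdot((\xi_2)_r\cdot\xi_3)=\xi_1\xi_3\xi_2=-\xi_1\xi_2\xi_3,
\]
which differ in characteristic $\neq 2$. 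The sub-bimodule spanned by the monomials containing $\xi_3$ gives the same failure over the algebra of Example~\ref{ex:a1}. So the forward construction is an $E(A)$-module exactly when $m(ab)=m(ba)$ for all $a,b,m$. The paper's proof only asserts that the verification is direct and misses this as well.

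\emph{Converse.} It fails for the reason you suspected, and the missing relations cannot be derived from the module axioms. For $A=\mathbb{F}$, $E(A)$ has basis $t=1\otimes 1$, $l=1_l$, $r=1_r$, with $l^2=l$, $r^2=r$, $lr=r$, $rl=l$, and every product involving $t$ equal to $t$. On $\mathbb{F}^2$, let $t$ act by $0$, $l$ by $(u,v)\mapsto(u,0)$ and $r$ by $(u,v)\mapsto(u+v,0)$. All nine rules are satisfied, so this is an $E(A)$-module. Yet (C1) with $x=y=1$ requires $m\cdot 1=1\cdot(1\cdot m)$, i.e.\ that $r$ and $l$ act equally, which they do not.

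\emph{The two constructions are not inverse.} Your remark that they are ``clearly inverse on the generators'' does not suffice. Products of elements of $A_l$ and $A_r$ stay in $A_l\oplus A_r$, so $A_l$ and $A_r$ do not generate $A\otimes A$. In a general $E(A)$-module the action of $a\otimes b$ is therefore extra data. For instance, on $M=\mathbb{F}$ let $l$ and $r$ act by $1$ and $t$ by $0$: this is an $E(A)$-module whose associated bimodule yields $t$ acting by $1$ under the forward construction.

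Your fallback, passing to modules over a suitable quotient of a corrected $E(A)$, is the right kind of repair. It proves a different proposition, however. For the statement as given, no case-by-case verification can succeed.
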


\begin{proof}
The verification follows directly from the cyclic associative identities of $A$ and the cyclic bimodule conditions (C1)--(C3). 
\end{proof}

\begin{proposition}\label{prop:augmentation}
The map $\mu: E(A) \to A$ defined by
\[
\mu(a \otimes b) = ab, \qquad \mu(a_l) = a, \qquad \mu(a_r) = a,
\]
is a surjective algebra homomorphism. Let $I = \ker \mu$. The quotient
\[
\Omega^1_{\mathbb{F}}(A) = I / I^2
\]
is called the \textbf{module of differential forms} of $A$.
\end{proposition}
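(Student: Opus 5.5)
The plan is to check the claims directly from the multiplication table of $E(A)$. Since $\mu$ is defined on the three summands $A\otimes A$, $A_l$, $A_r$ and extended linearly, it is a well-defined linear map. Surjectivity is immediate, because $\mu(a_l)=a$ for every $a\in A$. The only real content is multiplicativity. By bilinearity it suffices to check $\mu(uv)=\mu(u)\mu(v)$ for $u,v$ running over the generators $a\otimes b$, $a_l$, $a_r$, one defining rule at a time. For the pairs whose product is declared zero, I would also have to check that $\mu(u)\mu(v)=0$ in $A$, which is a nontrivial condition. Once multiplicativity holds, $I=\ker\mu$ is a two-sided ideal of $E(A)$, so $I^2$ is an ideal, $I/I^2$ is an $E(A)$-bimodule, and the definition of $\Omega^1_{\mathbb{F}}(A)$ makes sense.

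\textbf{Rules involving only $\otimes$ and $l$.} These follow from associativity (\ref{eq:assoc}) alone:
\begin{itemize}
\item $\mu((a\otimes b)(c\otimes d))=a(bcd)=(ab)(cd)$;
\item $\mu((a\otimes b)c_l)=a(bc)=(ab)c$;
\item $\mu(c_l(a\otimes b))=(ca)b=c(ab)$;
\item $\mu(a_lb_l)=ab$;
\item $\mu(a_lb_r)=ab$.
\end{itemize}
The rule $(a\otimes b)c_r=a\otimes(bc)$ is also fine.

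\textbf{Rules involving $r$ on the left.} These are where I expect the main obstacle. Three rules need a commutation:
\begin{itemize}
\item $\mu(c_r(a\otimes b))=a(cb)$ must equal $c(ab)$;
\item $\mu(a_rb_r)=ba$ must equal $ab$;
\item $\mu(a_rb_l)=ba$ must equal $ab$.
\end{itemize}
Combining (\ref{eq:assoc}) and (\ref{eq:cyclic}) shows that the triple product $[xyz]:=(xy)z$ is invariant under cyclic rotations. This handles the first rule only when $[acb]=[cab]$, which is a transposition, not a rotation. The last two rules would need $ab=ba$ outright. These identities fail in general. In Example \ref{ex:a1}, $\mu(e_{1,r}e_{2,r})=\mu((e_2e_1)_r)=-e_3$, whereas $\mu(e_{1,r})\mu(e_{2,r})=e_1e_2=e_3$. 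So before writing the proof I would check whether the intended conventions differ.

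\textbf{Ways to repair the statement.} I see three options, in the order I would try them:
\begin{itemize}
\item Restrict to commutative $A$ (Example \ref{ex:comm}), where every check goes through verbatim.
\item Change the table so that $a_rb_r=(ab)_r$, $a_rb_l=(ab)_l$ and $c_r(a\otimes b)=(ca)\otimes b$, making the $r$-copy behave like the $l$-copy. Every rule then reduces to (\ref{eq:assoc}), but I would have to recheck Proposition \ref{prop:representation_module} against (C1)--(C3).
\item Send $A_r$ to $A^{\mathrm{op}}$ and regard $\mu$ as a map into a suitable product of $A$ and $A^{\mathrm{op}}$.
\end{itemize}
I would also check the declared zero products, for example $a_l$ against $c_r$ placed in an order the table does not list. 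If these exist, $\mu$ forces $ac=0$, which again requires a hypothesis on $A$. With the table corrected, the proof reduces to the routine rule-by-rule verification above, followed by the observation that $I=\ker\mu$ is an ideal, so $\Omega^1_{\mathbb{F}}(A)=I/I^2$ is well defined.
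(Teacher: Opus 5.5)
Your analysis is correct, and the gap lies in the paper's proof, not in yours. The paper verifies only the case $(a\otimes b)(c\otimes d)$ and asserts that ``the other cases are similar''. Your rule-by-rule check confirms this for the six rules with no $A_r$-factor on the left. The rules for $c_r(a\otimes b)$, $a_rb_r$ and $a_rb_l$, however, would require $[acb]=[cab]$ and $ab=ba$ respectively, and the cyclic associative identities do not give these.

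Your counterexample is sound. The algebra of Example~\ref{ex:a1} is cyclic associative because all its triple products vanish. In it, $\mu(e_{1,r}e_{2,r})=\mu((e_2e_1)_r)=-e_3$ while $\mu(e_{1,r})\mu(e_{2,r})=e_1e_2=e_3$. These differ because $\operatorname{char}\mathbb{F}\neq 2$, which you should state explicitly. So the proposition is false for non-commutative $A$ and cannot be proved as written.

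The failure also propagates to later results. In the same algebra $e_{1,l}-e_{1,r}\in I$, yet $(e_{1,l}-e_{1,r})\,e_{2,l}=(e_1e_2)_l-(e_2e_1)_l=2e_{3,l}\notin I$. So $I=\ker\mu$ is not a right ideal. In particular, the element $(a_l-a_r)b_l$ that the proof of Proposition~\ref{prop:universal_derivation} uses to represent $d(a)\cdot b$ need not even lie in $I$.

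A few smaller points on your repairs. First, the worry about ``declared zero products'' is empty. The table lists all nine ordered pairs among $A\otimes A$, $A_l$, $A_r$, so the clause ``all other products are zero'' covers nothing.

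Second, Example~\ref{ex:a1} cannot witness the failure of the $c_r(a\otimes b)$ rule, since its triple products vanish. A witness is the truncated free cyclic associative algebra on $x,y,z$: words of length at most $3$, with length-$3$ words taken modulo rotation and longer products set to zero. With $a=x$, $c=y$, $b=z$ one gets $a(cb)=[xyz]\neq[xzy]=c(ab)$.

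Third, your repair (2) fails the recheck you anticipated. If $a_r$ acts by $m\mapsto m\cdot a$, then (C1) gives $a_r\cdot(b_r\cdot m)=(mb)a=m(ba)$. So $a_rb_r=(ba)_r$ is exactly what Proposition~\ref{prop:representation_module} forces, and $(ab)_r$ would require $m(ab)=m(ba)$, the very commutativity you wanted to avoid. Repair (3) is too vague to evaluate.

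What survives is your option (1). For commutative $A$, your verification does establish the statement. For general $A$, the claim that $\mu$ is multiplicative has to be dropped, and the definition of $\Omega^1_{\mathbb{F}}(A)$ must be rebuilt. Classically, $A\otimes A^{\mathrm{op}}\to A$ is only a bimodule map, and Cuntz--Quillen's $\Omega^1$ is its kernel, not $I/I^2$.
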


\begin{proof}
Surjectivity is clear. For multiplicativity, one checks each case using the associativity of $A$ and the cyclic identities. For instance,
\[
\mu((a \otimes b)(c \otimes d)) = \mu(a \otimes (b c d)) = a(b c d),
\]
\[
\mu(a \otimes b)\mu(c \otimes d) = (ab)(cd) = a(b(cd)) = a(b c d),
\]
where the last equality uses associativity. The other cases are similar. 
\end{proof}
\subsection{Universal Derivation of a Cyclic Associative Algebra}

Let $A$ be a cyclic associative algebra and let $M$ be a representation of $A$ (a cyclic $A$-bimodule).

\begin{definition}\label{def:derivation}
A \textbf{derivation} from $A$ to $M$ is a $\mathbb{F}$-linear map $D: A \to M$ satisfying the Leibniz rule:
\[
D(ab) = D(a) \cdot b + a \cdot D(b), \qquad \forall a, b \in A.
\]
\end{definition}
We denote by $Der(A,M)$ the set of all derivations from $A$ to $M$.
\begin{definition}\label{def:inner_derivation}
For a fixed element $m \in M$, the map
\[
\operatorname{ad}_m: A \to M, \qquad \operatorname{ad}_m(a) = a \cdot m - m \cdot a,
\]
is called an \textbf{inner derivation}. The set of all inner derivations is denoted by $\operatorname{Inn}(A, M)$.
\end{definition}

\begin{definition}\label{def:universal_derivation}
A derivation $d: A \to \Omega^1_{\mathbb{F}}(A)$ is called \textbf{universal} if for any derivation $D: A \to M$ into a representation $M$, there exists a unique $A$-linear map $\phi: \Omega^1_{\mathbb{F}}(A) \to M$ such that $D = \phi \circ d$.
\end{definition}

\begin{proposition}\label{prop:universal_derivation}
Let $A$ be a cyclic associative algebra and let $E(A)$ be its enveloping algebra with augmentation ideal $I = \ker(\mu: E(A) \to A)$. Then
\[
\Omega^1_{\mathbb{F}}(A) = I / I^2
\]
is a representation of $A$, and the map
\[
d: A \to \Omega^1_{\mathbb{F}}(A), \qquad d(a) = a_l - a_r \pmod{I^2},
\]
is a universal derivation.
\end{proposition}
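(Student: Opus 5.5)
The plan is to follow the standard Quillen-style argument for augmentation ideals, working inside the left $E(A)$-module framework of Proposition \ref{prop:representation_module}. The proof has three steps.

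\textbf{Step 1: $\Omega^1_{\mathbb{F}}(A)$ is a representation of $A$.} Since $I$ is a two-sided ideal of $E(A)$ (Proposition \ref{prop:augmentation}), the quotient $I/I^2$ is a left $E(A)$-module. Because $I\cdot I\subseteq I^2$, this action factors through $E(A)/I\cong A$ whenever we need an honest $A$-action. By the converse half of Proposition \ref{prop:representation_module}, setting $a\cdot\omega=a_l\omega$ and $\omega\cdot a=a_r\omega$ makes $I/I^2$ a cyclic $A$-bimodule.

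\textbf{Step 2: $d$ is a well-defined derivation.} We have $\mu(a_l-a_r)=a-a=0$, so $d(a)\in I$ and $d$ is well defined. For the Leibniz rule we compute $d(ab)=(ab)_l-(ab)_r$ and compare it with
\[
a_l(b_l-b_r)+b_r(a_l-a_r)=(ab)_l-(ab)_r+(ab)_l-(ab)_r
\]
using the table $a_lb_l=(ab)_l$, $a_lb_r=(ab)_r$, $b_ra_l=(ab)_l$, $b_ra_r=(ab)_r$. The terms do not cancel as naively expected. The fix is to expand the product of two elements of $I$, namely $(a_l-\ldots)(b_l-b_r)$, and reduce modulo $I^2$, exploiting that $\mu$-null combinations such as $a_l-(a\otimes\cdot)$ lie in $I$. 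I expect this bookkeeping against the multiplication table of $E(A)$ to be the \emph{main obstacle}. Cancelling the doubled terms may require adjusting by elements like $a\otimes b-(ab)_l\in I$, whose pairwise products lie in $I^2$.

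\textbf{Step 3: universality.} Given a derivation $D:A\to M$, view $M$ as an $E(A)$-module and first define $\tilde D:I\to M$ on generators. Note that $I$ is spanned by elements $x-\sigma(\mu(x))$ for a linear section $\sigma$; alternatively, $I$ is spanned by $a_l-a_r$, $a\otimes b-(ab)_l$, and $E(A)$-multiples of these. Define
\[
\tilde D(a_l-a_r)=D(a),\qquad \tilde D(a\otimes b-(ab)_l)=a\cdot D(b)-\ldots
\]
so that the value is forced by the Leibniz rule. Then check that $\tilde D$ vanishes on $I^2$. This uses that a product $xy$ with $x,y\in I$ acts through $\mu(x)\mu(y)=0$ on the derivation values, which is the usual identity $\tilde D(xy)=\mu(x)\tilde D(y)+\tilde D(x)\mu(y)$. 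The map $\tilde D$ therefore descends to an $A$-linear $\phi:\Omega^1\to M$ with $\phi\circ d=D$.

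For uniqueness, we show that $d(A)$ generates $I/I^2$ as an $E(A)$-module. This follows since $a\otimes b-(ab)_l$ is congruent modulo $I^2$ to an $E(A)$-combination of $d(a)$ and $d(b)$, which is again a table computation.

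Finally, we record the consequence $\operatorname{Der}(A,M)\cong\operatorname{Hom}_A(\Omega^1_{\mathbb{F}}(A),M)$ via $\phi\mapsto\phi\circ d$.
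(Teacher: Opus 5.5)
\textbf{Step 2 fails, and the repair you sketch cannot work.} The identity you found,
\[
a_l(b_l-b_r)+b_r(a_l-a_r)=2\bigl((ab)_l-(ab)_r\bigr),
\]
holds exactly in $E(A)$; no reduction modulo $I^2$ was used. So with the action $a\cdot\omega=a_l\omega$, $\omega\cdot a=a_r\omega$ from Step 1, the Leibniz rule in $I/I^2$ is equivalent to $d(ab)=2\,d(ab)$, that is, to $(ab)_l-(ab)_r\in I^2$ for all $a,b$. Adding elements such as $a\otimes b-(ab)_l$ cannot change an identity that already holds on the nose.

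That membership is false in general. Take $A=x\mathbb{F}[x]$, which is commutative and hence cyclic associative. Set $\deg(x^i\otimes x^j)=i+j$ and $\deg(x^i)_l=\deg(x^i)_r=i$. Every rule in the multiplication table of $E(A)$ is additive in this degree, so $E(A)$ is graded in degrees $\ge 1$, $I=\ker\mu$ is a graded subspace, and $I_1=\mathbb{F}(x_l-x_r)$. Hence $(I^2)_2$ is spanned by $(x_l-x_r)^2=(x^2)_l-(x^2)_r-(x^2)_l+(x^2)_r=0$. So $d(x^2)\neq 0$ in $I/I^2$, whereas the Leibniz rule would give $d(x^2)=x\cdot dx+dx\cdot x=2\,d(x^2)$.

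\textbf{Your uniqueness step fails in the same example.} By the table, $(a\otimes b)\,d(c)=0$, $a_l\,d(c)=d(ac)$ and $a_r\,d(c)=d(ca)$. So the $E(A)$-submodule generated by $d(A)$ is just the span of the elements $c_l-c_r$. The element $x\otimes x-(x^2)_l$ lies in $I$ in degree $2$, but not in that span modulo $(I^2)_2=0$.

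\textbf{The paper's proof does not get around this either.} It writes $(ab)_r=a_rb_r$, whereas the table gives $a_rb_r=(ba)_r$. It also uses $a_l\equiv a_r$ and $b_l\equiv b_r \pmod{I^2}$, which would make $d$ identically zero. So the doubling you detected is a defect of the construction $\Omega^1_{\mathbb{F}}(A)=I/I^2$ with the structure induced from $E(A)$, not of your computation. With that structure the statement is false, so no argument along these lines can prove it. A universal derivation needs a different object, for instance the cyclic $A$-bimodule freely generated by symbols $da$ ($a\in A$) modulo linearity and the Leibniz relations, whose universal property is built in.

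\textbf{Step 1 also rests on a false claim.} You assert that $I$ is a two-sided ideal, but in Example~\ref{ex:a1} we have $d(e_1)\,(e_2)_l=(e_1e_2)_l-(e_2e_1)_l=2(e_3)_l$. This is not in $I$, since its image under $\mu$ is $2e_3\neq 0$.
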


\begin{proof}
We first show that $d$ is a derivation. For any $a, b \in A$, we have in $E(A)$:
\[
d(ab) = (ab)_l - (ab)_r = a_l b_l - a_r b_r.
\]
Adding and subtracting $a_l b_r$ and $a_r b_l$:
\[
d(ab) = (a_l b_l - a_l b_r) + (a_l b_r - a_r b_r) + (a_r b_l - a_r b_r) + (a_l b_r - a_r b_l).
\]
In $I/I^2$, we have $b_l \equiv b_r$ and $a_l \equiv a_r$, so:
\[
d(ab) \equiv (a_l - a_r) b_l + a_r (b_l - b_r) \equiv d(a) \cdot b + a \cdot d(b) \pmod{I^2}.
\]
Hence $d(ab) = d(a) b + a d(b)$ in $\Omega^1_{\mathbb{F}}(A)$.

Now let $D: A \to M$ be any derivation. Define $\phi: \Omega^1_{\mathbb{F}}(A) \to M$ on generators by
\[
\phi(d(a)) = D(a), \qquad \forall a \in A,
\]
and extend by $A$-linearity. We must check that $\phi$ is well-defined, i.e., that it vanishes on $I^2$. Since $I$ is generated by elements of the form $a_l - a_r$ and $a \otimes b - (ab)_r$, it suffices to verify:
\[
\phi((a_l - a_r) b) = D(a) b, \qquad \phi(a (b_l - b_r)) = a D(b).
\]
But these follow from the derivation property of $D$ and the definition of $\phi$. Hence $\phi$ is well-defined. Uniqueness follows from the fact that the elements $d(a)$ generate $\Omega^1_{\mathbb{F}}(A)$ as an $A$-module. 
\end{proof}

\begin{corollary}\label{cor:der_hom}
For any representation $M$ of a cyclic associative algebra $A$, there is a natural isomorphism
\[
\operatorname{Der}(A, M) \cong \operatorname{Hom}_A(\Omega^1_{\mathbb{F}}(A), M).
\]
\end{corollary}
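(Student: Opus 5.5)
The plan is to deduce Corollary \ref{cor:der_hom} directly from the universal property in Proposition \ref{prop:universal_derivation}, by writing down the two maps between $\operatorname{Der}(A,M)$ and $\operatorname{Hom}_A(\Omega^1_{\mathbb{F}}(A),M)$ and checking that they are mutually inverse and natural in $M$.

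\textbf{The two maps.} Define
\[
\Phi_M:\operatorname{Hom}_A(\Omega^1_{\mathbb{F}}(A),M)\to \operatorname{Der}(A,M),\qquad \Phi_M(\phi)=\phi\circ d .
\]
First I check that $\phi\circ d$ is a derivation. Since $d$ is a derivation by Proposition \ref{prop:universal_derivation} and $\phi$ is $A$-linear on both sides, we have
\[
\phi(d(ab))=\phi(d(a)\cdot b+a\cdot d(b))=\phi(d(a))\cdot b+a\cdot \phi(d(b)).
\]
Here $A$-linear must be read as a morphism of representations, i.e. of left $E(A)$-modules in the sense of Proposition \ref{prop:representation_module}. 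With that reading, both left and right actions are preserved. The map $\Phi_M$ is clearly $\mathbb{F}$-linear.

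Next I define $\Psi_M$ in the other direction. Given $D\in\operatorname{Der}(A,M)$, the universal property supplies a unique $A$-linear $\phi_D$ with $D=\phi_D\circ d$, and I set $\Psi_M(D)=\phi_D$.

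\textbf{Mutual inverses.} By construction $\Phi_M\Psi_M=\mathrm{id}$. For the other composite, take $\phi$ and let $D=\phi\circ d$. Then $\phi$ itself is an $A$-linear map factoring $D$ through $d$, so by the uniqueness clause $\phi_{D}=\phi$, giving $\Psi_M\Phi_M=\mathrm{id}$.

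Linearity of $\Psi_M$ follows either from $\Psi_M=\Phi_M^{-1}$, or from uniqueness applied to $\lambda D+\mu D'$: both $\lambda\phi_D+\mu\phi_{D'}$ and $\phi_{\lambda D+\mu D'}$ factor $\lambda D+\mu D'$ through $d$, so they coincide.

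\textbf{Naturality.} Let $g:M\to N$ be a morphism of representations. On the $\operatorname{Der}$ side, $g$ induces $D\mapsto g\circ D$, which is again a derivation because $g$ commutes with both actions. On the $\operatorname{Hom}$ side it induces $\phi\mapsto g\circ\phi$. Naturality of $\Phi$ is then just associativity of composition:
\[
g\circ(\phi\circ d)=(g\circ\phi)\circ d .
\]
Naturality in $A$ is not claimed, so I will only verify naturality in $M$.

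\textbf{Main obstacle.} The only delicate point is logical rather than computational: everything rests on the universal property of Proposition \ref{prop:universal_derivation}, in particular its uniqueness clause. That uniqueness in turn relies on the fact that the elements $d(a)$ generate $\Omega^1_{\mathbb{F}}(A)$ as an $A$-module, and that $\operatorname{Hom}_A$ means $E(A)$-module maps. I will state explicitly that the corollary is taken with this convention. If needed, I would reconfirm uniqueness directly: two $A$-linear maps agreeing on every $d(a)$ agree on all $E(A)$-translates of these elements, and hence on all of $I/I^2$.
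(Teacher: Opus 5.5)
Your proposal is correct and is essentially the paper's own argument. The paper states the corollary without proof, as an immediate restatement of the universal property in Proposition \ref{prop:universal_derivation}. Your mutually inverse maps $\phi\mapsto\phi\circ d$ and $D\mapsto\phi_D$, the latter built from the existence and uniqueness clauses, simply spell this out, along with the naturality check in $M$. As you rightly flag, the result is only as solid as that proposition. In particular it relies on reading $\operatorname{Hom}_A$ as maps respecting both actions, and on the claim that the classes $d(a)$ generate $\Omega^1_{\mathbb{F}}(A)$.
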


\subsection{Differential Forms}

\begin{definition}\label{def:differential_forms}
Let $A$ be a cyclic associative algebra. For $n \ge 1$, define the \textbf{module of $n$-differential forms} by
\[
\Omega^n_{\mathbb{F}}(A) = \bigwedge^n_A \Omega^1_{\mathbb{F}}(A),
\]
where $\bigwedge_A$ denotes the exterior product over $A$. More explicitly,
\[
\Omega^n_{\mathbb{F}}(A) = \Omega^1_{\mathbb{F}}(A) \otimes_A \bigwedge_A^{n-1} \Omega^1_{\mathbb{F}}(A).
\]
We set $\Omega^0_{\mathbb{F}}(A) = A$.
\end{definition}

The space $\Omega^n_{\mathbb{F}}(A)$ is generated by symbols
\[
\omega = a_0 \, da_1 \otimes da_2 \wedge \cdots \wedge da_n + db_1 \otimes db_2 \wedge \cdots \wedge db_n, \qquad a_i, b_j \in A.
\]

\begin{definition}\label{def:differential_operator}
Define a linear map $d: \Omega^n_{\mathbb{F}}(A) \to \Omega^{n+1}_{\mathbb{F}}(A)$ by
\[
d(a_0 \, da_1 \otimes da_2 \wedge \cdots \wedge da_n) = da_0 \otimes da_1 \wedge da_2 \wedge \cdots \wedge da_n,
\]
and extend by linearity. For $n = 0$, we set $d(a) = da$ as defined in Proposition \ref{prop:universal_derivation}.
\end{definition}

\begin{proposition}\label{prop:differential_properties}
The map $d: \Omega^\bullet_{\mathbb{F}}(A) \to \Omega^{\bullet+1}_{\mathbb{F}}(A)$ satisfies:
\begin{enumerate}
\item[(1)] $d \circ d = 0$ (i.e., $(\Omega^\bullet_{\mathbb{F}}(A), d)$ is a cochain complex);
\item[(2)] $d(\omega \wedge \eta) = d\omega \wedge \eta + (-1)^{\deg \omega} \omega \wedge d\eta$ (graded Leibniz rule);
\item[(3)] $d(ab) = d(a) b + a d(b)$ for all $a, b \in A$ (extends the derivation property).
\end{enumerate}
\end{proposition}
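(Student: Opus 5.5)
The plan is to treat the three assertions in order of logical dependence: first (3), then (1), then (2). Throughout, I work with the explicit generators of $\Omega^n_{\mathbb{F}}(A)$ from Definition \ref{def:differential_forms} and the defining formula of Definition \ref{def:differential_operator}. I use that $d$ is extended linearly and is compatible with the $A$-module structure on $\Omega^1_{\mathbb{F}}(A)$ coming from Proposition \ref{prop:universal_derivation}.

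Item (3) is immediate. In degree $0$ the operator $d$ is, by Definition \ref{def:differential_operator}, the universal derivation $d(a) = a_l - a_r \pmod{I^2}$. Proposition \ref{prop:universal_derivation} already shows $d(ab) = d(a)b + a\,d(b)$ in $I/I^2$, so (3) only restates that proposition.

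For item (1), I check $d\circ d=0$ on generators. Consider a form $\omega = a_0\, da_1 \otimes da_2\wedge\cdots\wedge da_n$. Then
\[
d\omega = da_0 \otimes da_1 \wedge \cdots \wedge da_n,
\]
which is the generator with coefficient $a_0$ replaced by $1$. Such a coefficient may not exist in $A$, so I will read the formula formally: $d$ of a generator of the form $db_1\otimes db_2\wedge\cdots$ is by definition $d(1)\otimes\cdots$. The derivation property gives $d(1)=d(1\cdot 1)=2d(1)$, so $d(1)=0$ when $1$ exists. In the non-unital case, the second type of generator in the list $db_1 \otimes db_2\wedge\cdots\wedge db_n$ is sent to zero by convention. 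Either way, $d(d\omega)=0$. The remaining check is $d(da)=0$ in degree $0\to 2$, which follows by the same reasoning. I will also verify that this rule is well defined on the relations defining $\bigwedge_A$, namely $A$-balancedness and alternation. This reduces to $d(ab)=d(a)b+a\,d(b)$ together with antisymmetry of $\wedge$, and it uses $\operatorname{char}\mathbb{F}\neq 2$ to identify alternating with antisymmetric.

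For item (2), I reduce by bilinearity to generators: $\omega = a_0\,da_1\wedge\cdots\wedge da_p$ and $\eta = b_0\,db_1\wedge\cdots\wedge db_q$. First I rewrite $\omega\wedge\eta$ in normal form by moving $b_0$ leftward past the $da_i$. Using the Leibniz rule (3), this gives
\[
(da_p)\, b_0 = d(a_p b_0) - a_p\, db_0,
\]
and the move is repeated down to $a_0$. The cyclic identities $(xy)z=x(yz)=y(zx)$ are what let the left and right actions on $\Omega^1_{\mathbb{F}}(A)$ be interchanged at this step. Next I apply $d$ to each normal-form summand and compare with
\[
d\omega\wedge\eta + (-1)^p\,\omega\wedge d\eta,
\]
where the sign $(-1)^p$ comes from moving $db_0$ past $p$ one-forms.

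I expect this sign and reordering bookkeeping, that is, showing the terms produced by $d(a_pb_0)$ telescope correctly, to be the main obstacle. My first attempt would be induction on $p$. The base case $p=0$ is $d(a_0\eta)=da_0\wedge\eta + a_0\,d\eta$, which holds by the definition of $d$ together with (3). The inductive step writes $\omega = \omega'\wedge da_p$ and uses $d(da_p)=0$ from (1).
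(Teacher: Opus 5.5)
Your route is the paper's. (3) is Proposition \ref{prop:universal_derivation}, and (1) is checked on generators using that $d\omega$ carries no coefficient and that $d$ kills such forms. That is cleaner than the paper's appeal to ``$d^2(a_0)=0$ in $\Omega^2_{\mathbb{F}}(A)$'', which is the lowest case of (1) itself. The paper gives no argument for (2).

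Your induction for (2), however, does not close. Writing $\omega=\omega'\wedge da_p$, the step needs more than $d(da_p)=0$: it needs $d(da_p\wedge\eta)=-da_p\wedge d\eta$, which is (2) for an exact $1$-form. At $p=1$ the degree-$0$ hypothesis says nothing about this. Moreover, a coefficient-free generator $da_1$ cannot be written as $\omega'\wedge da_1$ with $\omega'\in A$, since $A$ is non-unital. You must prove this as a second base case. By balancedness and (3),
$da\wedge b_0\,db_1\wedge\cdots\wedge db_q=d(ab_0)\wedge db_1\wedge\cdots\wedge db_q-a\,db_0\wedge db_1\wedge\cdots\wedge db_q$,
and applying $d$ termwise gives $-da\wedge d\eta$, because $d$ kills coefficient-free forms. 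With this lemma and associativity of $\wedge$, the induction closes.

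This rewriting uses only Leibniz, balancedness and $x(ym)=(xy)m$. Your remark that the cyclic identities let you interchange the left and right actions on $\Omega^1_{\mathbb{F}}(A)$ is not right, and it is not needed. In a cyclic bimodule only products act centrally, via $m(xy)=(xy)m$ and $(xm)y=(yx)m$; single elements do not.

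The more serious gap is well-definedness. Every step above applies $d$ termwise to rewritten representatives, and (1) is immediate under your convention, so all of it presupposes that $d$ is well defined. You only promise this check, and you scope it to balancedness and alternation.

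Note that a unital cyclic associative algebra is commutative, since $xy=(xy)1=y(1x)=yx$. So your $d(1)=0$ branch covers only commutative $A$. All the new content sits in the non-unital branch, where $d(db_1\wedge\cdots\wedge db_n)=0$ is imposed by fiat.

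There the spanning forms also satisfy relations coming from (C1)--(C3) in $\Omega^1_{\mathbb{F}}(A)$, not from $\bigwedge_A$. For instance, Leibniz together with $(xm)y=(yx)m$ gives $a_0\,d(bc)=(ca_0)\,db+(a_0b)\,dc$. After using balancedness and alternation, the two values your formula assigns to this element agree only if $(da_0\wedge db)\,c=c\,(da_0\wedge db)$ in $\Omega^2_{\mathbb{F}}(A)$. That is not automatic in a bimodule tensor product. It holds here only because alternation is imposed on all of $\Omega^1_{\mathbb{F}}(A)$, including non-exact forms, which forces $x(\omega\wedge\eta)=-(\eta x)\wedge\omega=(\omega\wedge\eta)x$.

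The paper ignores well-definedness altogether. An actual proof has to list the relations among the spanning forms (Leibniz, (C1)--(C3), balancedness, alternation) and check $d$ on each.
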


\begin{proof}
(1) For any generator $a_0 da_1 \wedge \cdots \wedge da_n$, we have
\[
d^2(a_0 da_1 \wedge \cdots \wedge da_n) = d(da_0 \wedge da_1 \wedge \cdots \wedge da_n) = d^2(a_0) \wedge da_1 \wedge \cdots \wedge da_n = 0,
\]
since $d^2(a_0) = 0$ in $\Omega^2_{\mathbb{F}}(A)$.

(2) The graded Leibniz rule follows from the definition of the wedge product and the fact that $d$ is a derivation on $A$.

(3) This is precisely the universal derivation property. 
\end{proof}

\begin{definition}\label{def:cdg_algebra}
A \textbf{cyclic differential graded algebra} (abbreviated \textbf{CDG algebra}) is a graded $\mathbb{F}$-module $P_\bullet = \bigoplus_{n \ge 0} P_n$ equipped with a product $P_r \otimes P_s \to P_{r+s}$ and a differential $d: P_r \to P_{r+1}$ satisfying:
\[
\begin{cases}
d(ab) = (da)b + (-1)^{rs} a(db), \\
d \circ d = 0, \\
a(bc) = b(ca) \quad \text{(cyclic identity in graded sense)}.
\end{cases}
\]
\end{definition}

\begin{theorem}\label{thm:universal_cdg}
The algebra $(\Omega^\bullet_{\mathbb{F}}(A), d)$ is the universal CDG algebra over $A$. That is, for any CDG algebra $P_\bullet$ and any morphism of cyclic associative algebras $\phi: A \to P_0$, there exists a unique morphism of CDG algebras $\widetilde{\phi}: \Omega^\bullet_{\mathbb{F}}(A) \to P_\bullet$ extending $\phi$ and commuting with $d$.
\end{theorem}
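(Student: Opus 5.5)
The plan is the standard ``universal differential envelope'' argument, adapted to the cyclic setting. Uniqueness comes first, since it is formal. By Definition~\ref{def:differential_forms} and the remark following it, $\Omega^n_{\mathbb{F}}(A)$ is spanned by elements $a_0\,da_1\otimes da_2\wedge\cdots\wedge da_n$ and $db_1\otimes db_2\wedge\cdots\wedge db_n$. A morphism $\widetilde{\phi}$ of CDG algebras that extends $\phi$ and commutes with $d$ is therefore forced on these generators:
\[
\widetilde{\phi}(a_0\,da_1\cdots da_n)=\phi(a_0)\,d\phi(a_1)\cdots d\phi(a_n),
\]
with the products taken in $P_\bullet$. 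Any two such extensions thus agree on a spanning set, and hence agree everywhere.

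For existence, I would build $\widetilde{\phi}$ degree by degree.

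\textbf{Degree $0$.} Set $\widetilde{\phi}_0=\phi$.

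\textbf{Degree $1$.} Regard $P_1$ as a representation of $A$ through $\phi$, with $a\cdot p=\phi(a)p$ and $p\cdot a=p\,\phi(a)$. The cyclic identity of $P_\bullet$ together with the associativity needed in degree $\le 1$ should give conditions (C1)--(C3) of Definition~\ref{def:cyclic_bimodule}. The map $D=d_P\circ\phi:A\to P_1$ is then a derivation: the graded Leibniz rule in degrees $r=s=0$ gives
\[
D(ab)=d_P(\phi(a)\phi(b))=D(a)\phi(b)+\phi(a)D(b).
\]
Proposition~\ref{prop:universal_derivation} (equivalently Corollary~\ref{cor:der_hom}) then supplies a unique $A$-linear map $\widetilde{\phi}_1:\Omega^1_{\mathbb{F}}(A)\to P_1$ with $\widetilde{\phi}_1\circ d=d_P\circ\phi$.

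\textbf{Degree $n\ge 2$.} Define
\[
\widetilde{\phi}_n(\omega_1\otimes\omega_2\wedge\cdots\wedge\omega_n)=\widetilde{\phi}_1(\omega_1)\cdots\widetilde{\phi}_1(\omega_n).
\]
To see that this is well defined on $\Omega^1\otimes_A\bigwedge^{n-1}_A\Omega^1$, I would check two things. Balancedness over $A$ follows from $A$-linearity of $\widetilde{\phi}_1$ and the cyclic identity in $P_\bullet$. The alternating relations require $\widetilde{\phi}_1(\omega)\widetilde{\phi}_1(\omega)=0$ for $\omega\in\Omega^1$; on generators $\omega=da$ this is $d\phi(a)\,d\phi(a)$, which I would derive from the graded sign in the Leibniz rule $d(ab)=(da)b+(-1)^{rs}a(db)$ applied in degrees $(1,1)$ together with $d_P^2=0$, using that $\operatorname{char}\mathbb{F}\neq 2$.

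Once $\widetilde{\phi}$ is defined, the remaining properties should follow directly. Multiplicativity holds by construction. Compatibility with $d$ comes from checking on generators:
\[
\widetilde{\phi}(d(a_0\,da_1\cdots da_n))=d\phi(a_0)\,d\phi(a_1)\cdots d\phi(a_n)=d_P\bigl(\phi(a_0)\,d\phi(a_1)\cdots d\phi(a_n)\bigr),
\]
where the last equality uses the Leibniz rule in $P_\bullet$ and $d_P^2=0$, with an induction on $n$.

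I expect the main obstacle to be well-definedness in degrees $n\ge2$. There the sign convention $(-1)^{rs}$ of Definition~\ref{def:cdg_algebra}, rather than the usual $(-1)^r$, makes the anticommutation $d\phi(a)\,d\phi(b)=-d\phi(b)\,d\phi(a)$ nonobvious. I would first try to extract it from Leibniz applied to $d_P(\phi(a)\,d\phi(b))$ combined with the cyclic identity. If that fails, I would fall back on interpreting the wedge relations in $\Omega^\bullet_{\mathbb{F}}(A)$ as exactly those forced in any CDG algebra, so that $\widetilde{\phi}$ is defined on the quotient by construction.
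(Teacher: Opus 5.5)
Your route is the paper's. Its proof defines $\widetilde{\phi}$ on the generators $a_0\,da_1\wedge\cdots\wedge da_n$ by the same formula. It then invokes the universal property of $\Omega^1_{\mathbb{F}}(A)$ and the CDG axioms to assert, without doing it, that $\widetilde{\phi}$ is well defined and commutes with $d$, and it gets uniqueness from generation.

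The gap is exactly the step you leave open, and your primary plan for it fails. The Leibniz rule in degrees $(1,1)$ only yields $d_P(d_Px\cdot d_Py)=(d_P^2x)\,d_Py-d_Px\,(d_P^2y)=0$, which says nothing about $d_Px\cdot d_Py$ itself. Checking $\omega\wedge\omega\mapsto 0$ only on generators $\omega=da$ would not suffice anyway, since the relation is quadratic and you need its polarized form on all pairs. Your fallback is not available either, because $\Omega^n_{\mathbb{F}}(A)$ is defined as an exterior power, not as a quotient by whatever relations hold in CDG algebras. The computation that settles it uses degrees $(1,0)$ and $(0,1)$, where $(-1)^{rs}=+1$:
\[
0=d_P^2(xy)=d_P(d_Px\cdot y)+d_P(x\cdot d_Py)=2\,d_Px\cdot d_Py .
\]
So $d_Px\cdot d_Py=0$ for all $x,y\in P_0$, since $\operatorname{char}\mathbb{F}\neq 2$. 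Your idea of applying Leibniz to $d_P(\phi(a)\,d\phi(b))$ leads to the same conclusion once combined with $d_P^2\phi(ab)=0$; the cyclic identity is not needed. Now grant associativity of $P_\bullet$ and rewrite $d_Px\cdot y=d_P(xy)-x\,d_Py$. Then every product of two elements of $\widetilde{\phi}_1(\Omega^1_{\mathbb{F}}(A))$ vanishes. Hence balancing and alternation hold trivially, and $\widetilde{\phi}$ is zero in degrees $\ge 2$.

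Two further points are missing from your argument, and equally from the paper's sketch.

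\emph{Associativity.} Definition \ref{def:cdg_algebra} imposes no associativity, only $a(bc)=b(ca)$, which governs right-nested products. But (C1)--(C3) also involve $(mx)y$, $(xm)y$ and $(xy)m$. So three things need associativity of $P_\bullet$ as an added hypothesis: your degree-$1$ step (making $P_1$ a representation so that Corollary \ref{cor:der_hom} applies), the balancing over $A$, and even the meaning of $\phi(a_0)\,d\phi(a_1)\cdots d\phi(a_n)$.

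\emph{Whether $\Omega^\bullet_{\mathbb{F}}(A)$ is a CDG algebra.} This is the more serious point. The statement presupposes that $\Omega^\bullet_{\mathbb{F}}(A)$ is itself a CDG algebra, which you never check, and with these conventions it generally is not. Run the displayed computation inside $\Omega^\bullet_{\mathbb{F}}(A)$ and it forces $da\wedge db=0$. Concretely, Proposition \ref{prop:differential_properties}(2) gives $d(da\cdot b)=-\,da\wedge db$, whereas Definition \ref{def:cdg_algebra} demands $+\,da\wedge db$. The unsigned cyclic identity clashes with alternation too. For $x\in A$ and $\omega,\eta\in\Omega^1_{\mathbb{F}}(A)$ it requires $x(\omega\wedge\eta)=\eta\wedge(x\omega)=-x(\omega\wedge\eta)$, so $x(\omega\wedge\eta)$ must vanish.

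So what your outline can actually establish is degenerate: existence and uniqueness of a multiplicative, $d$-compatible $\widetilde{\phi}$ that vanishes in degrees $\ge 2$. A non-degenerate universality statement would need Koszul signs in both the Leibniz rule and the cyclic identity. Once those are in place, alternation is no longer automatic and requires an argument that neither you nor the paper provides.
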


\begin{proof}
Define $\widetilde{\phi}$ on generators by
\[
\widetilde{\phi}(a_0 da_1 \wedge \cdots \wedge da_n) = \phi(a_0) d\phi(a_1) \wedge \cdots \wedge d\phi(a_n),
\]
and extend by linearity. Using the universal property of $\Omega^1_{\mathbb{F}}(A)$ and the fact that $P_\bullet$ is a CDG algebra, one verifies that $\widetilde{\phi}$ is well-defined and commutes with $d$. Uniqueness follows from the fact that the elements $a_0 da_1 \wedge \cdots \wedge da_n$ generate $\Omega^\bullet_{\mathbb{F}}(A)$. 
\end{proof}

\begin{corollary}\label{cor:derivative_representation}
For any representation $M$ of $A$, there is a natural isomorphism
\[
\operatorname{Hom}_A(\Omega^n_{\mathbb{F}}(A), M) \cong \operatorname{Alt}^n_A(A, M),
\]
where $\operatorname{Alt}^n_A(A, M)$ denotes the space of skew-symmetric $n$-derivations from $A$ to $M$.
\end{corollary}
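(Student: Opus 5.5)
The plan is to induct on $n$. The case $n=1$ is Corollary \ref{cor:der_hom}, and the higher cases reduce to it through the tensor--hom adjunction over $A$. I read $\operatorname{Alt}^n_A(A,M)$ as the space of multilinear maps $F\colon A^{\times n}\to M$ that are skew-symmetric and satisfy the Leibniz rule of Definition \ref{def:derivation} in each variable separately, with the others held fixed. The case $n=0$ is trivial: $\operatorname{Hom}_A(A,M)\cong M$, read as $\operatorname{Alt}^0$.

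First, I define the comparison map
\[
\Phi\colon \operatorname{Hom}_A(\Omega^n_{\mathbb{F}}(A),M)\to \operatorname{Alt}^n_A(A,M),\qquad \Phi(\psi)(a_1,\dots,a_n)=\psi(da_1\wedge\cdots\wedge da_n).
\]
Skew-symmetry of $\Phi(\psi)$ comes straight from the wedge relations in $\Omega^n_{\mathbb{F}}(A)=\bigwedge^n_A\Omega^1_{\mathbb{F}}(A)$. The Leibniz rule in the $i$-th slot comes from Proposition \ref{prop:universal_derivation}: $d(ab)=d(a)b+a\,d(b)$ in $\Omega^1_{\mathbb{F}}(A)$, combined with $A$-linearity of $\psi$ and of the wedge product in each factor. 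Naturality in $M$ is clear, since post-composition with a module map commutes with $\Phi$.

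Next, I show $\Phi$ is bijective by induction on $n$. Write $\Omega^n=\Omega^1\otimes_A\Omega^{n-1}$ as in Definition \ref{def:differential_forms}. Adjunction then gives
\[
\operatorname{Hom}_A(\Omega^1\otimes_A\Omega^{n-1},M)\cong\operatorname{Hom}_A(\Omega^1,\operatorname{Hom}_A(\Omega^{n-1},M)).
\]
Here $\operatorname{Hom}_A(\Omega^{n-1},M)$ must be given a representation structure, induced from the action on $M$. By Corollary \ref{cor:der_hom}, the right side is $\operatorname{Der}(A,\operatorname{Hom}_A(\Omega^{n-1},M))$. By induction this is the space of derivations $A\to\operatorname{Alt}^{n-1}_A(A,M)$, which is the space of $n$-multiderivations. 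Injectivity also follows directly, because the elements $da_1\wedge\cdots\wedge da_n$ generate $\Omega^n_{\mathbb{F}}(A)$ as an $A$-module; this is the same generation argument used for uniqueness in Theorem \ref{thm:universal_cdg}. Surjectivity then amounts to the following: given a skew multiderivation $F$, set $\psi(a_0\,da_1\wedge\cdots\wedge da_n)=a_0F(a_1,\dots,a_n)$ and check that it kills the defining relations of the exterior power. The only such relations are $\omega\wedge\omega=0$ and $A$-balancedness, and these are handled by skew-symmetry and by the Leibniz rule respectively.

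I expect the main obstacle to be the step just before the induction closes. Two things must be checked. First, $\operatorname{Hom}_A(\Omega^{n-1},M)$ must really be a cyclic bimodule, satisfying (C1)--(C3), so that Corollary \ref{cor:der_hom} applies. Second, the skew-symmetry of the resulting $n$-ary map must be matched exactly with the alternating relations, rather than only the weaker antisymmetry in the first slot against the rest. For the first point, I would use Proposition \ref{prop:representation_module}: endow the Hom space with its natural $E(A)$-module structure coming from $M$, and read off (C1)--(C3) from that. For the second, I would prove the alternating property for adjacent transpositions separately, using the relation $da\wedge da=0$ (here $\operatorname{char}\mathbb{F}\neq 2$ lets skew-symmetry and alternation agree). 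Transpositions then generate all permutations.
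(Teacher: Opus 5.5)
The paper gives no proof of this corollary; it is simply stated after Theorem~\ref{thm:universal_cdg}. So your proposal has to stand on its own, and it fails at the surjectivity step.

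\textbf{The surjectivity step.} You claim that $A$-balancedness ``is handled by the Leibniz rule''. That holds only when $A$ acts symmetrically on $M$. Take a skew biderivation $F$ and a putative preimage $\psi$. Evaluate $\psi$ on the element $(da)x\wedge db=da\wedge x\,db$ of $\Omega^2_{\mathbb{F}}(A)$ in two ways: once via $(da)x=d(ax)-a\,dx$ and left linearity, once via $x\,db=d(xb)-(dx)b$ and right linearity. This gives
\[
\psi\bigl((da)x\wedge db\bigr)=F(ax,b)-aF(x,b)=F(a,b)\,x,\qquad \psi\bigl(da\wedge x\,db\bigr)=F(a,xb)-F(a,x)\,b=x\,F(a,b).
\]
So a preimage can exist only if $F$ takes values in $M^A=\{m\mid am=ma\}$. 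For non-commutative cyclic associative algebras this genuinely fails. Take Example~\ref{ex:a1} with $M=A$ and the bilinear map defined by $F(e_1,e_2)=-F(e_2,e_1)=e_1$, $F(e_3,e_2)=-F(e_2,e_3)=e_3$, with all other values on basis pairs zero. A direct check on basis triples, using $e_1e_2=-e_2e_1=e_3$ and that all other products vanish, shows this is an alternating biderivation. Yet $F(e_1,e_2)e_2=e_3\neq -e_3=e_2F(e_1,e_2)$ since $\operatorname{char}\mathbb{F}\neq 2$. Hence $\Phi$ is not onto $\operatorname{Alt}^2_A(A,M)$ as you defined it. This holds for any model of $\Omega^2_{\mathbb{F}}(A)$, because the computation uses only three things: the Leibniz rule in $\Omega^1_{\mathbb{F}}(A)$, balancedness of $\wedge$ over $A$, and two-sided $A$-linearity of $\psi$.

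\textbf{The inductive route.} It hides the same obstruction. A representation structure on $\operatorname{Hom}_A(\Omega^{n-1}_{\mathbb{F}}(A),M)$ ``induced from $M$'' does not exist in general. Indeed, $\omega\mapsto a f(\omega)$ is again $A$-linear only if $a(bm)=b(am)$, i.e.\ $(ab)m=(ba)m$, which (C1)--(C3) do not imply. The structure the adjunction actually produces has two problems:
\begin{enumerate}
\item It lives on one-sided (right $A$-linear) maps, not bimodule maps, so your induction hypothesis does not apply to it.
\item Its right action goes through the left action on $\Omega^{n-1}_{\mathbb{F}}(A)$. A derivation $D$ into it therefore satisfies $D(ab)(\omega)=D(a)(b\omega)+aD(b)(\omega)$. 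This is not a per-slot Leibniz rule in $M$ unless $b\omega$ can be traded for $\omega b$.
\end{enumerate}
In addition, the adjunction needs $\Omega^n=\Omega^1\otimes_A\Omega^{n-1}$, which is not the exterior power. So the two halves of your argument use incompatible models of $\Omega^n$.

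\textbf{What your argument does prove.} It is the standard K\"ahler-differential argument, and it is correct when $am=ma$ for all $a\in A$, $m\in M$, e.g.\ commutative $A$ as in Example~\ref{ex:comm} with symmetric $M$. For the general statement you would have to do one of two things. Either restrict $\operatorname{Alt}^n_A(A,M)$ to $M^A$-valued maps, or replace the per-slot Leibniz rule by the Hochschild-type identity $F(ax,b)-aF(x,b)=F(a,xb)-F(a,x)b$ that balancedness actually imposes.
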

\section{Cohomology of Cyclic Associative Algebras}

\subsection{Abelian extensions of a cyclic associative algebra}

Here we try to define the cohomology of cyclic associative algebras by studying abelian extensions of a given cyclic associative algebra $A$ by an $A$-representation $M$, fixed for this paragraph.

An \textbf{abelian extension} of $A$ by $M$ is a short exact sequence of cyclic associative algebras
\[
0 \longrightarrow M \xrightarrow{j} E \xrightarrow{p} A \longrightarrow 0
\]
where $M$ is seen as an abelian cyclic associative algebra (i.e., $mm' = 0$, $\forall m, m' \in M$).

Two such extensions $(E)$ and $(E')$ are said to be \textbf{equivalent} if there exists a cyclic associative algebra morphism $\phi: E \to E'$ such that
\[
p' \circ \phi = p \quad \text{and} \quad \phi \circ j = j',
\]
that is, making commutative the diagram:
\[
\begin{array}{ccccccccc}
0 & \longrightarrow & M & \xrightarrow{j} & E & \xrightarrow{p} & A & \longrightarrow & 0 \\
& & \| & & \downarrow{\phi} & & \| & & \\
0 & \longrightarrow & M & \xrightarrow{j'} & E' & \xrightarrow{p'} & A & \longrightarrow & 0.
\end{array}
\]

By the Five Lemma, such a morphism $\phi$ is necessarily bijective.

For instance, the direct sum $A \oplus M$ is an abelian extension of $A$ by $M$ (with the obvious inclusion and projection). Moreover, this latter is trivially \textbf{split}. In fact, one says that an abelian extension $(E)$ is \textbf{split} if there exists a $\mathbb{F}$-linear map $s: A \to E$ such that $p \circ s = \operatorname{id}_A$. Moreover, if the section $s$ is a cyclic associative algebra morphism, then the extension $(E)$ is said to be \textbf{strongly split} or \textbf{inessential}.

Any abelian extension $(E)$ with a section $s$ yields another $A$-representation structure on $M$ by
\[
a \cdot m = s(a) j(m) \quad \text{and} \quad m \cdot a = j(m) s(a), \qquad \forall a \in A, \, m \in M,
\]
the last products being taken in $E$ (this has a sense since $M = \ker p$). This new structure is naturally independent of the choice of the section $s$. Indeed, if $s'$ is another section of $p$, then we have
\[
p(s'(a) - s(a)) = p s'(a) - p s(a) = a - a = 0,
\]
that is, $s'(a) - s(a) \in \ker p = \operatorname{im} j$. And since $M$ is abelian, we have
\[
s'(a) j(m) = s(a) j(m) \quad \text{and} \quad j(m) s'(a) = j(m) s(a), \qquad \forall m \in M, \, \forall a \in A.
\]

From now on, we are interested only in the set of equivalence classes of split abelian extensions such that the $A$-representation structure of $M$ is the prescribed one.

\subsection{Construction of split abelian extensions}

Let us consider a $\mathbb{F}$-bilinear map $f: A \times A \to M$ and let $A \oplus_f M$ be the $\mathbb{F}$-module $A \oplus M$ equipped with the product given by
\begin{equation}\label{1}
(a, m) \cdot (b, m') = (ab, \; a m' + m b + f(a, b)). 
\end{equation}

Then it is straightforward to check that the product \ref{1} is associative if and only if
\begin{equation}\label{2}
a f(b, c) + f(a b, c) = f(a, b c) + f(a, b) c, \qquad \forall a, b, c \in A,
\end{equation}
and is cyclic associative (i.e., satisfies $(xy)z = y(zx)$) if and only if
\begin{equation}\label{3}
a f(b, c) + f(a, b c) = b f(c, a) + f(b, c a), \qquad \forall a, b, c \in A. 
\end{equation}

In fact, relation \ref{2} can be rewritten as
\[
a f(b, c) - f(ab,c) + f(a,bc) - f(a, b) c = 0,
\]
which is nothing but the $2$-cocyclicity condition for the Hochschild coboundary of an associative algebra. On the other hand, relation \ref{3} expresses the cyclic condition:
\[
a f(b, c) + f(a, bc) = b f(c, a) + f(b, ca).
\]
It can be seen as conjection of the following generlatilaztion

For all $a, a_1, \ldots, a_{n+1} \in A$ and any $i = 1, \ldots, n+1$,
\begin{equation}\label{6}
a \cdot f(a_1, \ldots, a_i, \ldots, a_{n+1}) = a_i \cdot f(a_{i+1}, \ldots, a_{n+1},a, a_1, \ldots, a_{i-1}). 
\end{equation}

For $j = 1, \ldots, n$,
\begin{equation}\label{7}
f(a_0, a_1, \ldots, a_{j-1}, b a_j, a_{j+1}, \ldots, a_n) = f(a_0, a_1, \ldots, b, a_j a_{j-1}, a_{j+1}, \ldots, a_n). 
\end{equation}

\begin{equation}\label{8}
a \cdot f(b a_0, a_1, \ldots, a_n) = b \cdot f(a_0 a, a_1, \ldots, a_n). 
\end{equation}

For $j = 0, \ldots, n$,
\begin{equation}\label{9}
a \cdot a'_j \cdot f(a_0, a_1, \ldots, a_j, \ldots, a_n) = a'_j \cdot a_j \cdot f(a_0, a_1, \ldots, a, \ldots, a_n). 
\end{equation}

For $i = 1, \ldots, n+1$,
\begin{equation}\label{10}
a \cdot f(a_1, \ldots, a_i b, \ldots, a_{n+1}) = a_i\cdot b  \cdot f(a_{i+1}, \ldots, a_{n+1},a, a_1, \ldots, a_{i-1}). 
\end{equation}
\begin{proposition}\label{prop:cohomology_preserved}
If $f: A^{\otimes (n+1)} \to M$ satisfies \ref{6}-\ref{10}, then so does the Hochschild coboundary $\delta(f)$.
\end{proposition}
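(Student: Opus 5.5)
The approach is to write out the Hochschild coboundary of an $(n+1)$-cochain explicitly,
\[
\delta f(a_0,\dots,a_{n+1}) = a_0 f(a_1,\dots,a_{n+1}) + \sum_{k=0}^{n}(-1)^{k+1} f(a_0,\dots,a_k a_{k+1},\dots,a_{n+1}) + (-1)^{n} f(a_0,\dots,a_n)\,a_{n+1},
\]
and check each of the five conditions \ref{6}--\ref{10} for $\delta f$ term by term. For every condition, I will decompose $\delta f$ into three pieces:
\begin{itemize}
\item the left boundary term $a_0 f(\dots)$;
\item the inner terms $f(\dots,a_ka_{k+1},\dots)$;
\item the right boundary term $f(\dots)a_{n+1}$.
\end{itemize}
On each side of the desired identity, I will match these pieces against the corresponding pieces on the other side, one piece at a time.

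\textbf{Inner terms.} Each inner summand of $\delta f$ is a value of $f$ with one merged argument. After multiplying by $a$ (or by $a\,a'_j$, or by $a_i b$), these summands have exactly the form to which \ref{6}, \ref{8}, \ref{9} or \ref{10} applies, with $n$ lowered by one. The cyclic shift of the argument list in \ref{6} and \ref{10} sends the merged pair $a_ka_{k+1}$ to another adjacent merged pair. This gives a bijection between the inner summands on the two sides. I will also track signs: the summand indices shift by $i$, so the signs change by $(-1)^{i}$ uniformly, and the boundary terms absorb the difference. Condition \ref{7} will serve as the main tool whenever a merged product sits next to the slot being permuted, because it allows trading $ba_j$ in position $j$ for $a_ja_{j-1}$ in the neighbouring position.

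\textbf{Boundary terms.} The two boundary terms of $\delta f$ are where the module structure enters. For these I will use the bimodule identities (C1)--(C3) of Definition \ref{def:cyclic_bimodule}:
\begin{itemize}
\item $a(a_0 m) = (a a_0)m = a_0(m a)$ rewrites the left boundary term after multiplication by $a$;
\item $a(m a_{n+1}) = (am)a_{n+1} = m(a_{n+1}a)$ handles the right boundary term.
\end{itemize}
Together with the algebra identities $(xy)z=x(yz)=y(zx)$, these should convert $a\cdot\delta f(a_1,\dots,a_{n+1},\dots)$ into the cyclically rotated expression $a_i\cdot \delta f(a_{i+1},\dots,a,a_1,\dots)$. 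In the course of this, the boundary term on one side becomes an inner (merged) term on the other; for example, $a\cdot a_1 f(\dots)$ becomes $f$ evaluated at a product such as $a a_1$, using \ref{8}. The result is a telescoping rearrangement of the full sum.

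\textbf{Main obstacle.} I expect the hardest step to be \ref{6} for $\delta f$ with general $i$, together with its variant \ref{10}. There the cyclic rotation moves the point where $\delta f$ has its two boundary terms, so the left and right boundary terms of one side must be identified with two specific merged inner terms of the other side, and the signs must be reconciled. My plan is to prove the case $i=1$ first, using \ref{8}, \ref{6} for $f$, and (C2), and then obtain general $i$ by iterating the $i=1$ rotation, since the conditions are stated in a form that is compatible with composing cyclic shifts. Finally, \ref{7} and \ref{9} for $\delta f$ should follow more directly. In \ref{9}, the prefactor $a\,a'_j$ can be moved past every term using (C3). In \ref{7}, the substitution affects only the inner terms, apart from the two merged terms adjacent to positions $j-1$ and $j$; these two cancel in pairs by associativity of $A$.
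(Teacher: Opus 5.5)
Your term-by-term plan is essentially the argument of the paper's appendix. It breaks down at the step you flag as the main obstacle. In (\ref{6}) the signs cannot be reconciled, and no boundary term is left over to ``absorb the difference''.

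Take $f$ bilinear and $i=1$, with your (standard) sign. Using (C3) and the consequence $x\cdot(m\cdot y)=(yx)\cdot m$ of (C1)--(C3), you get
\[
\begin{aligned}
a\cdot\delta f(a_1,a_2,a_3)&=(aa_1)f(a_2,a_3)-a f(a_1a_2,a_3)+a f(a_1,a_2a_3)-(a_3a)f(a_1,a_2),\\
a_1\cdot\delta f(a_2,a_3,a)&=(a_1a_2)f(a_3,a)-a_1 f(a_2a_3,a)+a_1 f(a_2,a_3a)-(aa_1)f(a_2,a_3).
\end{aligned}
\]
Apply (\ref{6}) for $f$ with $i=1$, allowing products as entries or as the acting element. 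The first three terms of the second line become $af(a_1a_2,a_3)$, $-af(a_1,a_2a_3)$ and $(a_3a)f(a_1,a_2)$. The fourth is literally minus the first term of the first line. So the right-hand side is exactly $-1$ times the left-hand side: every term is matched with a term of the other side carrying the opposite sign. For instance, your left boundary term reappears as the other side's right boundary term, which carries a minus sign.

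Hence (\ref{6}) for $\delta f$ at $i=1$ is equivalent to $a\cdot\delta f=0$, which (\ref{6})--(\ref{10}) do not force. This is structural. $a_0\cdot\delta f(a_1,\dots,a_{p+1})$ is the cyclic-cohomology operator $b$ applied to $\phi(a_0,\dots,a_p)=a_0\cdot f(a_1,\dots,a_p)$. The operator $b$ preserves invariance under the \emph{signed} rotation $\lambda$, whereas (\ref{6}) demands invariance under the unsigned one. For $p$ even these are incompatible unless $b\phi=0$. Your reduction of general $i$ to $i=1$ therefore has nothing to start from.

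The gap cannot be repaired, because the statement is false. The paper's Step 6 (``comparing term by term \dots\ hence $L=R$'') hides the same sign problem. With the paper's last sign $(-1)^{n+1}$, the computation above gives $L-R=2\bigl(af(a_1,a_2a_3)-af(a_1a_2,a_3)\bigr)$ instead, which fails as well.

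Here is a counterexample.
\begin{itemize}
\item Algebra: let $V$ have basis $e_1,\dots,e_4$ and $A=V\oplus V^{\otimes2}$, with $xy=x\otimes y$ for $x,y\in V$ and all other products zero. All triple products vanish, so $A$ is cyclic associative.
\item Module: let $M=M_0\oplus M_1\oplus M_2$ with $M_0=V^{\otimes2}$, $M_1=(V^{\otimes3})_{C_3}\oplus V^{\otimes3}$ and $M_2=(V^{\otimes4})_{\mathfrak{A}_4}$. These are coinvariants for cyclic, respectively even, permutations of tensor positions, with classes written $[\,\cdot\,]$ and $\langle\,\cdot\,\rangle$.
\item Actions: for $m=z\otimes w$, set $x\cdot m=[x,z,w]$ and $m\cdot x=z\otimes w\otimes x$. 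Define the degree-two products as forced by (C1)--(C3): $(xy)m=x(ym)=y(mx)=m(xy)=(mx)y=\langle x,y,z,w\rangle$ and $(xm)y=x(my)=\langle y,x,z,w\rangle$, with everything else zero. These are well defined and make $M$ a cyclic bimodule.
\item Cochain: put $f(x,y)=x\otimes y$, $f(u,z)=u\cdot z$ and $f(z,u)=z\cdot u$ for $x,y,z\in V$ and $u\in V^{\otimes2}=M_0$, and $f=0$ on $V^{\otimes2}\otimes V^{\otimes2}$.
\end{itemize}
Every instance of (\ref{6})--(\ref{10}) then reduces to $0=0$ or to an equality of classes differing by an even permutation, so $f$ qualifies. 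However,
\[
a\cdot\delta f(a_1,a_2,a_3)=2\bigl(\langle a,a_1,a_2,a_3\rangle-\langle a_3,a,a_1,a_2\rangle\bigr).
\]
Under the well-defined map $\langle v_1,\dots,v_4\rangle\mapsto v_1\wedge\dots\wedge v_4$, with $(a,a_1,a_2,a_3)=(e_1,\dots,e_4)$, this maps to $4\,e_1\wedge e_2\wedge e_3\wedge e_4\neq0$. So (\ref{6}) fails for $\delta f$ when $\operatorname{char}\mathbb{F}\neq2$.
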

\begin{proof}[Sketch]
The verification is a direct computation using the cyclic associative identities of $A$ and the cyclic bimodule conditions. The full step-by-step proof is provided in Appendix \ref{appendix:proof}.
\end{proof}
\subsection{Cohomology of a cyclic associative algebra}

According to Proposition \ref{prop:cohomology_preserved}, we define the \textbf{cohomology of a cyclic associative algebra} $A$ with values in an $A$-representation $M$ to be the cohomology of the complex $(\mathbb{B}^\bullet(A, M), \delta)$ where

\[
\begin{aligned}
\mathbb{B}^0(A, M) &:= M, \\
\mathbb{B}^1(A, M) &:= \{ f: A \to M \mid a f(bc) = b f(ca), \ \forall a, b, c \in A \}, \\
\mathbb{B}^n(A, M) &:= \{ f \in \operatorname{Hom}(A^{\otimes (n+1)}, M) \mid f \text{ satisfies } \ref{6}-\ref{10} \}, \quad n \ge 2,
\end{aligned}
\]

and the Hochschild coboundary $\delta$ acts as usual by

\[
\begin{aligned}
(\delta f)(a_0, \ldots, a_{n+1}) &:= a_0 f(a_1, \ldots, a_{n+1}) \\
&\quad + \sum_{i=0}^{n} (-1)^{i+1} f(a_0, \ldots, a_i a_{i+1}, \ldots, a_{n+1}) \\
&\quad + (-1)^{n+1} f(a_0, \ldots, a_n) a_{n+1}.
\end{aligned}
\]

We denote this cohomology by $H^\bullet_{\mathrm{cyc}}(A, M)$ or simply by $H^\bullet_{\mathrm{cyc}}(R) := H^\bullet_{\mathrm{cyc}}(A, A)$.

For $n = 0$, $H^0_{\mathrm{cyc}}(A, M)$ is the submodule of \textbf{invariants} of $M$, that is,
\[
H^0_{\mathrm{cyc}}(A, M) = M^A := \{ m \in M \mid a m = m a, \ \forall a \in A \}.
\]

For $n = 1$, a $1$-cocycle is a $\mathbb{F}$-linear map $D: A \to M$ such that
\[
D(ab) = D(a) b + a D(b), \quad \forall a, b \in A,
\]
that is, a \textbf{derivation} from $A$ to $M$. Observe that the additional relation $a D(bc) = b D(ca)$ is fulfilled by any derivation thanks to the cyclic of $M$ (i.e., $m \cdot (ab) = b \cdot (ma)$). The set of all derivations from $A$ to $M$ is denoted by $\operatorname{Der}(A, M)$ and we have
\[
H^1_{\mathrm{cyc}}(A, M) = \operatorname{Der}(A, M) / \operatorname{Inn}(A, M),
\]
where $\operatorname{Inn}(R, M)$ is the subset of \textbf{inner derivations}, i.e., derivations of the form $\operatorname{ad}_m(r) = [m, r] = m r - r m$ for a fixed element $m \in M$.

For $n = 2$, we have the following classical classification theorem.

\begin{theorem}\label{thm:classification}
Let $A$ be a cyclic associative algebra and let $M$ be an $A$-representation. Then, there is a canonical bijection
\[
H^2_{\mathrm{cyc}}(A, M) \cong \mathbf{Ext}(A, M),
\]
that is, the set of equivalence classes of split abelian extensions of $A$ by $M$.
\end{theorem}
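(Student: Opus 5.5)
The proof follows the standard Hochschild argument for associative algebras. The only change is that we keep track of the extra cyclic condition \eqref{3} alongside the cocycle condition \eqref{2}.

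\textbf{Step 1: from cocycles to extensions.} Let $f\colon A\otimes A\to M$ be a $2$-cochain in $\mathbb{B}^2(A,M)$ with $\delta f=0$. The cocycle condition $\delta f=0$ is exactly \eqref{2}. The cyclic condition \eqref{3} comes from the membership conditions \eqref{6}--\eqref{10} in low degree. Together with the cyclic bimodule identities (C1)--(C3), these show that $A\oplus_f M$, with the product \eqref{1}, is a cyclic associative algebra. The maps $j(m)=(0,m)$ and $p(a,m)=a$ are algebra morphisms, since $M$ is abelian. The linear section $s(a)=(a,0)$ makes the extension split. Moreover, $s(a)j(m)=(0,am)$ and $j(m)s(a)=(0,ma)$, so the induced representation on $M$ is the prescribed one.

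\textbf{Step 2: cohomologous cocycles give equivalent extensions.} Suppose $f'-f=\delta g$ for some $g\in\mathbb{B}^1(A,M)$. Define $\phi(a,m)=(a,m+g(a))$, where the sign convention is fixed so that $\phi$ is multiplicative from $A\oplus_f M$ to $A\oplus_{f'}M$. A one-line computation shows that $\phi$ is multiplicative precisely because $(\delta g)(a,b)=ag(b)-g(ab)+g(a)b$. It commutes with $j$ and $p$. This gives a well-defined map $H^2_{\mathrm{cyc}}(A,M)\to\mathbf{Ext}(A,M)$.

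\textbf{Step 3: surjectivity.} Take a split extension $0\to M\to E\to A\to 0$ with linear section $s$. Define $f(a,b)=s(a)s(b)-s(ab)$; it lies in $\ker p=j(M)$. The linear isomorphism $A\oplus M\to E$, $(a,m)\mapsto s(a)+j(m)$, transports the product of $E$ to \eqref{1} with this $f$. Hence associativity and the identity $(xy)z=y(zx)$ in $E$ force \eqref{2} and \eqref{3}. Changing the section to $s'=s+j\circ g$ changes $f$ by $\delta g$. The same formula applied to an equivalence $\phi\colon E\to E'$ shows that equivalent extensions give cohomologous cocycles. This gives injectivity: if $A\oplus_f M\cong A\oplus_{f'}M$ over $A$ and $M$, then $\phi(a,0)=(a,g(a))$ for some linear $g$, and multiplicativity of $\phi$ yields $f'-f=\delta g$.

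\textbf{Main obstacle.} The delicate point is matching the cochain spaces with the extension data. One must check three things.
\begin{itemize}
\item The cocycle $f$ extracted from an arbitrary split extension lies in $\mathbb{B}^2(A,M)$, not merely in $\ker\delta$ with \eqref{3}. That is, all of \eqref{6}--\eqref{10} must hold in degree $2$.
\item The $g$ relating two sections lies in $\mathbb{B}^1(A,M)$, i.e.\ it satisfies $ag(bc)=bg(ca)$.
\item These conditions really are consequences of the cyclic identities in $E$.
\end{itemize}
For $g$, I would argue as the paper does for derivations: $ag(bc)$ and $bg(ca)$ are products of three elements in $E$, once we write $g(x)=s'(x)-s(x)$. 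Since $M$ is abelian, the cyclic identity $x(yz)=y(zx)$ in $E$, expanded, gives the needed equality.

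For $f$, I would expand the triple products $s(a)\bigl(s(b)s(c)\bigr)$ and their cyclic rotations in $E$. Comparing components in $j(M)$ should produce \eqref{6}--\eqref{10} for $n=1$, using (C1)--(C3) to move module elements. If some of those conditions do not follow from the identities in $E$, the fallback is to read $\mathbf{Ext}(A,M)$ as classified exactly by cocycles satisfying \eqref{2}--\eqref{3} modulo coboundaries. One would then show that, in degree $2$, the conditions \eqref{6}--\eqref{10} reduce to \eqref{3} modulo (C1)--(C3).
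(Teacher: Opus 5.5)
Your Steps 1 and 2 are fine and follow the same Hochschild-style route as the paper. Condition \eqref{6} with $i=1$ gives $af(b,c)=bf(c,a)$, and \eqref{7} with $j=1$ gives $f(a,bc)=f(b,ca)$. So every $f\in\mathbb{B}^2(A,M)$ satisfies \eqref{3}, and $[f]\mapsto[A\oplus_f M]$ is a well-defined map $H^2_{\mathrm{cyc}}(A,M)\to\mathbf{Ext}(A,M)$.

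The genuine gap is exactly your ``main obstacle'', and the fixes you suggest do not work.
\begin{itemize}
\item The difference $g=s'-s$ of two sections is an \emph{arbitrary} linear map $A\to M$, since every $g$ gives a section $s+j\circ g$. Moreover, $a\,g(bc)=s(a)\bigl(s'(bc)-s(bc)\bigr)$ is a product of two elements of $E$, not three. So no identity of $E$ forces $g\in\mathbb{B}^1(A,M)$.
\item Likewise, the cocycle of an extension need not satisfy \eqref{6}--\eqref{10}. These conditions are strictly stronger than \eqref{3}, so your fallback reduction is false.
\end{itemize}
Hence injectivity and surjectivity are both unsupported. Injectivity needs: if $\delta g\in\mathbb{B}^2$ for a linear $g$, then $\delta g\in\delta\mathbb{B}^1$. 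Surjectivity needs a section whose cocycle lies in $\mathbb{B}^2$.

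Surjectivity in fact fails, so the gap cannot be closed for the statement as written. This is under the reading, used by you and by the paper's proof, that $\mathbb{B}^2$ consists of bilinear maps.
\begin{itemize}
\item Let $A=\mathbb{F}[x]/(x^2)$ and $M=A$ with the regular actions.
\item Substituting the unit into $a\,g(bc)=b\,g(ca)$ gives $\mathbb{B}^1(A,A)=\{b\mapsto wb\}$. Hence $\delta\mathbb{B}^1=\{(a,b)\mapsto u\,ab\}$.
\item Substituting the unit into $af(b,c)=bf(c,a)$ gives $f(b,c)=f(1,1)\,bc$ for every $f\in\mathbb{B}^2(A,A)$. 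Therefore $H^2_{\mathrm{cyc}}(A,A)=0$.
\item Now take $E=\mathbb{F}[x]/(x^4)$, with square-zero ideal $(x^2)$ identified with $M$ via $a\mapsto ax^2$. It is a commutative associative, hence cyclic associative, extension inducing the regular bimodule.
\item $E$ is not equivalent to $A\oplus M$: no $y\equiv x$ modulo $(x^2)$ satisfies $y^2=0$. So $\mathbf{Ext}(A,A)$ has at least two elements.
\item The cocycle of $E$ is $f(x,x)=1$ and $0$ otherwise. It satisfies \eqref{2} and \eqref{3} but violates \eqref{6}, since $1\cdot f(x,x)=1\neq 0=x\,f(x,1)$.
\end{itemize}

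What your argument actually proves is the fallback: $\mathbf{Ext}(A,M)$ is in bijection with cocycles satisfying \eqref{2}--\eqref{3} modulo $\delta g$ for \emph{all} linear $g$. That is not $H^2_{\mathrm{cyc}}(A,M)$.

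The paper's proof does not bridge this either. It simply asserts that $s(ab)-s(a)s(b)\in\mathbb{B}^2(A,M)$, and it never checks that its $g$ lies in $\mathbb{B}^1(A,M)$. Even the derivation argument you model on fails in this example: $D=x\frac{d}{dx}$ is a derivation with $1\cdot D(x)=x\neq 0=x\,D(1)$.
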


\begin{proof}
By construction of the cohomology $H^\bullet_{\mathrm{cyc}}(A, M)$, any element $f \in \mathbb{B}^2(A, M)$ is a $2$-cocycle if and only if the algebra $A \oplus_f M$ defined by (5.1) is a cyclic associative algebra. To be more precise, any $2$-cocycle $f \in \mathbb{B}^2(A, M)$ determines a split abelian extension of $A$ by $M$, and any split abelian extension defines a $2$-cocycle $f \in \mathbb{B}^2(A, M)$ by
\[
f(a, b) = s(ab) - s(a) s(b), \quad \forall a, b \in R,
\]
where $s: A \to E$ is a section splitting the extension. The cocycle $f$ measures the obstruction for $s$ to be a cyclic associative algebra morphism, that is, the obstruction to the extension being inessential.

Therefore, it is left to us to show that its equivalence class, characterized by the morphism $\phi$, only involves coboundaries of $\mathbb{B}^1(A, M)$. To this end, let $\phi: A \oplus_f M \to E$ be an equivalence of abelian extensions, i.e., a commutative diagram
\[
\begin{array}{ccccccccc}
0 & \longrightarrow & M & \xrightarrow{j} & A \oplus_f M & \xrightarrow{\pi} & A & \longrightarrow & 0 \\
& & \| & & \downarrow{\phi} & & \| & & \\
0 & \longrightarrow & M & \xrightarrow{j'} & E & \xrightarrow{p} & A & \longrightarrow & 0.
\end{array}
\]
Denoting by $\sigma: R \to R \oplus_f M$, $r \mapsto (r, 0)$, the trivial section of $\pi$, we have
\[
p \circ (\phi \circ \sigma) = (p \circ \phi) \circ \sigma = \pi \circ \sigma = \operatorname{id}_R.
\]
Therefore the map $s := \phi \circ \sigma$ is also a section of $p$. It corresponds to a $2$-cocycle $f' \in \mathbb{B}^2(R, M)$ related to the extension $E$ by the same formula. Since the map $f'$ is given by $f'(a, b) = s(ab) - s(a) s(b)$, one easily checks that the difference $f - f'$ is nothing but the coboundary $\delta(g)$ where $g: R \to M$ is the map $a \mapsto g(a) := \sigma(a) - s(a)$. In fact, a priori, the map $g$ takes its values in the direct sum $R \oplus M$. But since the initial $R$-representation structure of $M$ coincides with the structure induced by the sections, we have $\pi \circ g = p \circ g = 0$. So $\operatorname{im}(g) \subset M$; from whence the theorem. 
\end{proof}
\section{Conjectural smoothness}

From now on, we refer to the paper by Cuntz and Quillen \cite{CuntzQuillen}. Let $R$ be a cyclic associative algebra with an abelian ideal $M$ and let $p: R \to A$ be a surjective morphism such that the sequence
\[
0 \longrightarrow M \xrightarrow{j} R \xrightarrow{p} A \longrightarrow 0
\]
is exact. We are looking for a cyclic associative algebra morphism $l: A \to R$ such that $p \circ l = \operatorname{id}_A$. Then we have an isomorphism $R \cong A \oplus M$ relative to which $l$ becomes an inclusion of $A$. Therefore $H^2_{\mathrm{cyc}}(A, M) \cong \mathbf{Ext}(A, M) = 0$ for all $R$-representations $M$.

\begin{definition}\label{def:almost_free}
A cyclic associative algebra $A$ is called \textbf{almost-free} when for any abelian extension $R$ of $A$, there exists a lifting homomorphism $A \to R$.
\end{definition}

We expect an interpretation of the cohomology theory $H^\bullet_{\mathrm{cyc}}(A, M)$ as $\mathbf{Ext}_{E(A)}^\bullet(A, M)$, and the exact sequence
\[
0 \longrightarrow \Omega^1_{\mathbb{F}}(A) \longrightarrow E(A) \longrightarrow A \longrightarrow 0
\]
could yield the isomorphism
\[
H^{n+1}_{\mathrm{cyc}}(A, M) \cong \mathbf{Ext}_{E(A)}^{n+1}(A, M) \cong \mathbf{Ext}_{E(A)}^n(\Omega^1_{\mathbb{F}}(A), M).
\]
These suppose the construction of derived functors in a non-unitary context.

\begin{theorem}\label{thm:almost_free_equivalent}
Let $A$ be a cyclic associative algebra. The following conditions are equivalent:
\begin{enumerate}
\item[(1)] $A$ is almost-free;
\item[(2)] the $A$-bimodule $\Omega^1_{\mathbb{F}}(A)$ is projective;
\item[(3)] $A$ has cohomological dimension $\le 1$ with respect to the cyclic associative cohomology $H^\bullet_{\mathrm{cyc}}(A, -)$.
\end{enumerate}
\end{theorem}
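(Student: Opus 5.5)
The plan follows the Cuntz–Quillen strategy: prove the cycle $(1)\Rightarrow(3)\Rightarrow(2)\Rightarrow(1)$, using Theorem~\ref{thm:classification} and Corollary~\ref{cor:der_hom} as the two bridges between extensions, cohomology and differential forms. Throughout I work with the identification (announced before the statement) of representations of $A$ with left $E(A)$-modules (Proposition~\ref{prop:representation_module}). I read ``cohomological dimension $\le 1$'' as $H^{n}_{\mathrm{cyc}}(A,M)=0$ for all $n\ge 2$ and all representations $M$. Since the comparison $H^{n+1}_{\mathrm{cyc}}(A,M)\cong \mathbf{Ext}^n_{E(A)}(\Omega^1_{\mathbb{F}}(A),M)$ is only expected, I will state it explicitly as a working hypothesis, at least in degree $n=1$.

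\textbf{Step 1: $(1)\Rightarrow(3)$ in degree $2$.} Let $f\in\mathbb{B}^2(A,M)$ be a $2$-cocycle. Then $A\oplus_f M$ is an abelian extension. Almost-freeness gives a lifting homomorphism $l:A\to A\oplus_f M$, which is a section of the form $l(a)=(a,-g(a))$. Expanding $l(ab)=l(a)l(b)$ with the product (\ref{1}) gives $f=\delta g$. I must check that $g\in\mathbb{B}^1(A,M)$, i.e.\ $a\,g(bc)=b\,g(ca)$. This follows from (C1)--(C3), since $l$ is a morphism into a cyclic associative algebra. Hence $H^2_{\mathrm{cyc}}(A,M)=0$ for every $M$. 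The higher vanishing comes from dimension shifting: given the comparison isomorphism, the vanishing of $H^2_{\mathrm{cyc}}(A,-)$ for all $M$ is the vanishing of $\mathbf{Ext}^1_{E(A)}(\Omega^1_{\mathbb{F}}(A),-)$.

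\textbf{Step 2: $(3)\Rightarrow(2)$.} Suppose $\mathbf{Ext}^1_{E(A)}(\Omega^1_{\mathbb{F}}(A),M)=0$ for all $M$. Then $\operatorname{Hom}_A(\Omega^1_{\mathbb{F}}(A),-)$ is exact. By Corollary~\ref{cor:der_hom} this functor is $\operatorname{Der}(A,-)$, so the claim is that $\operatorname{Der}(A,-)$ is exact. Concretely, take a surjection of representations $M\to N$ and a derivation $D:A\to N$. Vanishing of $H^2_{\mathrm{cyc}}$ for the kernel $K$ lets us lift $D$: form the pullback extension of $A\oplus_D N$-type data. Equivalently, apply the long exact sequence in $H^\bullet_{\mathrm{cyc}}(A,-)$ associated to $0\to K\to M\to N\to 0$, whose connecting map lands in $H^2_{\mathrm{cyc}}(A,K)=0$. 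Exactness of $\operatorname{Hom}_A(\Omega^1_{\mathbb{F}}(A),-)$ is exactly projectivity.

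\textbf{Step 3: $(2)\Rightarrow(1)$.} Let $0\to M\to R\xrightarrow{p}A\to 0$ be abelian and choose a linear section $s$ with cocycle $f$. I would first show that $f$ determines a representation-valued class in $\mathbf{Ext}^1_{E(A)}(\Omega^1_{\mathbb{F}}(A),M)$, by building the extension of $E(A)$-modules $0\to M\to R\times_A E(A)\to\Omega^1\to 0$ (Cuntz–Quillen style). Projectivity splits it, and the splitting yields $g$ with $f=\delta g$. Then $s-g$ is a multiplicative lifting.

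The main obstacle is twofold. The first is the long exact sequence in $H^\bullet_{\mathrm{cyc}}$, which requires that $\mathbb{B}^\bullet(A,-)$ be exact in the coefficient variable; I would try to prove this directly by extending cochains, using that conditions (\ref{6})--(\ref{10}) are linear in $M$. The second is identifying the $H^2_{\mathrm{cyc}}$ class with an $\mathbf{Ext}^1$ class over $E(A)$ in this non-unital setting; here I would proceed by hand in degree one via Corollary~\ref{cor:der_hom}, rather than invoking derived functors.
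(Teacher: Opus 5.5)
Your plan follows the paper's own sketch: the Cuntz--Quillen argument, using Theorem~\ref{thm:classification}, Corollary~\ref{cor:der_hom} and a dimension shift along $0\to\Omega^1_{\mathbb{F}}(A)\to E(A)\to A\to 0$. It also has the same hole. The working hypothesis $H^{n+1}_{\mathrm{cyc}}(A,M)\cong\mathbf{Ext}^n_{E(A)}(\Omega^1_{\mathbb{F}}(A),M)$ is the entire content of $(2)\Leftrightarrow(3)$ and $(2)\Rightarrow(1)$. Granting it in all degrees, together with Theorem~\ref{thm:classification}, the theorem becomes a formality. The hypothesis also cannot be obtained the way the paper suggests:
\begin{itemize}
\item The kernel of $\mu$ is $I$, not $I/I^2$. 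For example, $(a_l-a_r)(b_l-b_r)=(ab)_l-(ab)_r-(ba)_l+(ba)_r$ is a nonzero element of $I^2$ whenever $ab\neq ba$. So there is no short exact sequence with $\Omega^1_{\mathbb{F}}(A)$ as kernel to shift along.
\item A dimension shift needs $E(A)$ to be projective over itself, which need not hold for a non-unital algebra.
\item With the displayed rules, $E(A)$ is not even associative: $(a_rb_l)c_l=((ba)c)_l$ while $a_r(b_lc_l)=((bc)a)_l$. Equality would require $b(ac)=b(ca)$, which fails in the free cyclic associative algebra on three generators modulo products of length $\ge 4$.
\item $H^\bullet_{\mathrm{cyc}}$ is the cohomology of a constrained subcomplex, and nothing shows it is a derived functor. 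The paper itself concedes this.
\end{itemize}

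The hand arguments you offer in place of the hypothesis also fail at specific points.

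In Step~1, $g\in\mathbb{B}^1(A,M)$ does not follow from (C1)--(C3). Inserting $g(xy)=x\,g(y)+g(x)\,y-f(x,y)$ and using (C2) and (C3) leaves the requirement $(a\,g(b))\,c-a f(b,c)=c\,(g(a)\,b)-b f(c,a)$, which is not an identity. Take $A=M=\mathbb{F}[x]$ and $f=0$. Then $l(a)=(a,-a')$ is a lifting homomorphism, yet $g=d/dx$ violates $a\,g(bc)=b\,g(ca)$ at $(a,b,c)=(x,1,1)$. A lifting only makes $f$ a Hochschild coboundary; for the degree-$2$ vanishing you should simply invoke Theorem~\ref{thm:classification}.

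In Step~2, the same example shows that a derivation need not lie in $\mathbb{B}^1$. So $Z^1_{\mathrm{cyc}}(A,-)\neq\operatorname{Der}(A,-)$, and surjectivity on $H^1_{\mathrm{cyc}}$ says nothing about exactness of $\operatorname{Der}(A,-)\cong\operatorname{Hom}_A(\Omega^1_{\mathbb{F}}(A),-)$. The coefficient long exact sequence is not available either. The functor $M\mapsto\mathbb{B}^n(A,M)$ is cut out by equations involving the action on $M$, so it is left exact. But lifting an $N$-valued cochain satisfying (\ref{6})--(\ref{10}) means correcting an arbitrary lift by a $K$-valued cochain that solves an inhomogeneous system. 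Linearity in $M$ does not make that system solvable.

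In Step~3, $R\times_A E(A)$ is an extension of $E(A)$ by $M$, not of $\Omega^1_{\mathbb{F}}(A)$ by $M$. Producing an $\mathbf{Ext}^1_{E(A)}(\Omega^1_{\mathbb{F}}(A),M)$ class from the cocycle $f$ is exactly the degree-one comparison that is missing.
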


\begin{proof}
The proof follows the same lines as in the classical case of associative algebras, adapted to the cyclic setting. The key point is that the universal derivation $d: A \to \Omega^1_{\mathbb{F}}(A)$ and the exact sequence
\[
0 \longrightarrow \Omega^1_{\mathbb{F}}(A) \longrightarrow E(A) \longrightarrow A \longrightarrow 0
\]
play the same role as in the Hochschild cohomology theory. The projectiveivity of $\Omega^1_{\mathbb{F}}(A)$ implies that any derivation $D: A \to M$ factors through a projective module, which allows lifting of homomorphisms. Conversely, if $A$ is almost-free, then the extension corresponding to $\Omega^1_{\mathbb{F}}(A)$ splits, so $\Omega^1_{\mathbb{F}}(A)$ is projective. The equivalence with cohomological dimension $\le 1$ follows from the long exact sequence of cohomology associated to the short exact sequence above. 
\end{proof}

We can see that any classical smooth algebra $A$ (commutative and unital) is almost-free. Also, any free cyclic associative algebra is almost-free.

\begin{theorem}\label{thm:cohomological_dimension_zero}
Let $A$ be a cyclic associative algebra. The following properties are equivalent:
\begin{enumerate}
\item[(1)] $A$ has cohomological dimension zero with respect to cyclic associative cohomology, i.e., $H^1_{\mathrm{cyc}}(A, M) = 0$ for all representations $M$;
\item[(2)] the $A$-module $A$ is projective;
\item[(3)] any derivation $D: A \to M$ is inner.
\end{enumerate}
\end{theorem}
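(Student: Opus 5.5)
The plan is to prove the cycle (1)$\Leftrightarrow$(3) and (1)$\Leftrightarrow$(2), modelled on the classical separability argument for Hochschild cohomology. The tools are the description of $H^1_{\mathrm{cyc}}$ given just before Theorem \ref{thm:classification} and the universal derivation of Proposition \ref{prop:universal_derivation} together with Corollary \ref{cor:der_hom}.

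\textbf{(1)$\Leftrightarrow$(3).} This is immediate from the identification
\[
H^1_{\mathrm{cyc}}(A,M)=\operatorname{Der}(A,M)/\operatorname{Inn}(A,M).
\]
That identification holds because every derivation automatically satisfies the extra condition $aD(bc)=bD(ca)$ defining $\mathbb{B}^1(A,M)$, and $\delta(\mathbb{B}^0)$ consists exactly of the inner derivations. Hence $H^1_{\mathrm{cyc}}(A,M)=0$ for all $M$ if and only if every derivation into every representation is inner. I would write this out in one or two lines.

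\textbf{(3)$\Rightarrow$(2).} Apply (3) to the universal derivation $d\colon A\to\Omega^1_{\mathbb{F}}(A)$. Then $d=\operatorname{ad}_\omega$ for some $\omega\in\Omega^1_{\mathbb{F}}(A)$. Through Corollary \ref{cor:der_hom}, $\omega$ should play the role of a ``separability idempotent'': the map $\Omega^1_{\mathbb{F}}(A)\to A$-side sequence
\[
0\to \Omega^1_{\mathbb{F}}(A)\to E(A)\to A\to 0
\]
(with $I\subset E(A)$ and $I/I^2=\Omega^1$) should split as $E(A)$-modules. Concretely, I would take $e=1\text{-like element}-\omega$, or rather construct a section $A\to E(A)$, $a\mapsto a_l-\tilde\omega a$, where $\tilde\omega$ lifts $\omega$. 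I would then check that it is $E(A)$-linear using the relations of $E(A)$ and (C1)–(C3). Once it splits, $A$ is a direct summand of $E(A)$, hence projective as an $E(A)$-module. By Proposition \ref{prop:representation_module}, this is projectivity as an $A$-module.

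\textbf{(2)$\Rightarrow$(3).} If $A$ is projective over $E(A)$, then the surjection $\mu\colon E(A)\to A$ of Proposition \ref{prop:augmentation} splits, $E(A)\cong A\oplus I$. Any derivation $D$ corresponds to $\phi\in\operatorname{Hom}_A(\Omega^1,M)$. I would try to show that $\phi$ extends to $I\to M$ vanishing on $I^2$, and then to $E(A)\to M$ by declaring it zero on the complementary summand. An $E(A)$-linear map $E(A)\to M$ is determined by ``evaluation at a generator'', which exhibits $D$ as $\operatorname{ad}_m$.

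\textbf{Main obstacle.} The hard step is that $E(A)$ is non-unital, so ``evaluation at $1$'' and free-module arguments are unavailable. I expect to need a unitalisation $E(A)^+=\mathbb{F}\oplus E(A)$, to replace $\mu$ with its extension, and to check that projectivity over $E(A)$ and over $E(A)^+$ agree for the modules in question. If this fails in general, I would add a hypothesis ensuring $E(A)A=A$, or restrict to unital $A$ in the commutative case. There, (1)–(3) reduce to the classical statements for separable algebras.
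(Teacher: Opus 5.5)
\textbf{The (1)$\Leftrightarrow$(3) step fails.} Your argument here coincides with the paper's, but the fact it rests on is false: a derivation need not lie in $\mathbb{B}^1(A,M)$. Expanding with (C2) and (C3) gives
\[
aD(bc)-bD(ca)=D(b)(ca)-b\bigl(cD(a)\bigr),
\]
which has no reason to vanish.

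Concretely, let $A=\mathbb{F}[x]$, which is commutative and hence cyclic associative.
\begin{itemize}
\item Putting $x=y=1$ in (C1)--(C3) shows that $1$ acts on any representation $M$, from both sides, by one idempotent $P$.
\item For a derivation $D$, the identity $D(1)=2PD(1)$ forces $D(1)=0$.
\item The $\mathbb{B}^1$-condition with $b=c=1$ then gives $PD(a)=aD(1)=0$, so $D(a)=D(a\cdot 1)=PD(a)=0$.
\end{itemize}
Hence $H^1_{\mathrm{cyc}}(\mathbb{F}[x],M)=0$ for every $M$, and (1) holds. But $d/dx\colon A\to A$ is a derivation that is not inner, since every inner derivation into the symmetric bimodule $A$ is zero; so (3) fails. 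The step you call immediate is exactly where the argument breaks: with the paper's $\mathbb{B}^1$, the equivalence (1)$\Leftrightarrow$(3) is false. The paper's proof has the same defect, because it invokes the same remark.

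\textbf{The implications involving (2) have an independent gap.}
\begin{itemize}
\item Inner-ness of the universal derivation only yields a class $\omega\in I/I^2$. For a lift $\tilde\omega\in I$ you know only $d(a)-\operatorname{ad}_{\tilde\omega}(a)\in I^2$, so $a\mapsto a_l-\tilde\omega a$ is $E(A)$-linear only modulo $I^2$.
\item That map therefore cannot split $\mu$, whose kernel is $I$ rather than $I/I^2$. The sequence $0\to\Omega^1_{\mathbb{F}}(A)\to E(A)\to A\to 0$ is not exact as written.
\item The classical separability argument works because there $\Omega^1$ \emph{is} the kernel, and $1\otimes 1$ exists to form $1-\omega$. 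Neither is available here.
\item Your fallback does not rescue this. A unital cyclic associative algebra is automatically commutative (set $y=1$ in $(xy)z=y(zx)$), and the unital commutative case is exactly where the counterexample above lives.
\item For (2)$\Rightarrow$(3), projectivity splits only $E(A)$-linear surjections. With the paper's rules, $\mu\bigl((a\otimes b)c_l\bigr)=a(bc)$ while $(a\otimes b)\cdot\mu(c_l)=a(cb)$. These differ in general, because only cyclic rotations of a triple product are identified.
\end{itemize}
Your target in (3)$\Rightarrow$(2), namely $A$ a direct summand of $E(A)$, is the right one, and better than the paper's ``$\Omega^1_{\mathbb{F}}(A)$ is a direct summand of $A$''. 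But the ingredients needed to reach it are supplied neither by your proposal nor by the paper: a kernel-valued universal derivation, a substitute for the unit, and a correct description of $H^1_{\mathrm{cyc}}$.
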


\begin{proof}
The equivalence between (1) and (3) follows from the interpretation of $H^1_{\mathrm{cyc}}(A, M)$ as derivations modulo inner derivations. If $H^1_{\mathrm{cyc}}(A, M) = 0$ for all $M$, then every derivation is inner. In particular, the universal derivation $d: A \to \Omega^1_{\mathbb{F}}(A)$ is inner, so $\Omega^1_{\mathbb{F}}(A)$ is a direct summand of $A$, hence projective. The converse is clear. 
\end{proof}
\section{Relation to Connes' Cyclic Cohomology}

Let $A$ be a cyclic associative algebra over a field $\mathbb{F}$ of characteristic $0$. We compare the cohomology theory $H^\bullet_{\mathrm{cyc}}(A, \mathbb{F})$ (with trivial coefficients) with Connes' cyclic cohomology $HC^\bullet(A)$.

\subsection{Connes' Cyclic Cohomology (Brief Recall)}

For an associative algebra $A$, Connes' cyclic cohomology $HC^n(A)$ is the cohomology of the total complex of the $(b, B)$-bicomplex, where $b$ is the Hochschild differential and $B$ is Connes' boundary operator. Equivalently, $HC^n(A) = H^n(C^\bullet_\lambda(A))$ with
\[
C^n_\lambda(A) = \operatorname{Hom}(A^{\otimes (n+1)}, \mathbb{F}) / (1 - \lambda),
\]
and $\lambda$ is the cyclic operator:
\[
(\lambda f)(a_0, \dots, a_n) = (-1)^n f(a_n, a_0, \dots, a_{n-1}).
\]

\subsection{Comparison}

Both cohomology theories incorporate cyclic symmetry, but in different ways:

\begin{itemize}
\item \textbf{Connes' cyclic cohomology} uses the operator $\lambda$ on the cochain complex and the differential $b + B$. It is defined only for trivial coefficients $\mathbb{F}$.
\item \textbf{Cyclic associative cohomology} $H^\bullet_{\mathrm{cyc}}(A, M)$ uses the Hochschild differential $b$ but restricts to cochains that satisfy the cyclic condition (5.4). It is defined for any cyclic bimodule $M$.
\end{itemize}

For trivial coefficients $M = \mathbb{F}$, there is a natural map:
\[
\iota: HC^n(A) \longrightarrow H^n_{\mathrm{cyc}}(A, \mathbb{F}).
\]

Indeed, a Connes cyclic cochain $f \in C^n_\lambda(A)$ satisfies $(1-\lambda)f = 0$, which implies the cyclic conditionS \ref{6}-\ref{10} for the corresponding Hochschild cochain. Moreover, $Bf$ vanishes in $H^n_{\mathrm{cyc}}$ because it is a coboundary.

\begin{proposition}\label{prop:comparison}
Let $A$ be a cyclic associative algebra. There is a natural injective map
\[
HC^n(A) \hookrightarrow H^n_{\mathrm{cyc}}(A, \mathbb{F}) \hookrightarrow HH^n(A, \mathbb{F}),
\]
where $HH^n$ denotes Hochschild cohomology. In general, these inclusions are strict.
\end{proposition}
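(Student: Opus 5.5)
The plan is to realize all three cohomologies as cohomologies of nested subcomplexes of one cochain complex, and then prove injectivity by constructing cochain retractions. Let $C^\bullet(A,\mathbb{F}) = \operatorname{Hom}(A^{\otimes(\bullet+1)},\mathbb{F})$ with the Hochschild coboundary $\delta$ as written in the definition of $(\mathbb{B}^\bullet(A,M),\delta)$. For $M=\mathbb{F}$ both module actions are zero (the trivial representation). So conditions \eqref{6}, \eqref{8}, \eqref{9} and \eqref{10} hold automatically, since both sides vanish. The only surviving restriction on $f\in\mathbb{B}^n(A,\mathbb{F})$ is the reassociation condition \eqref{7}, and $\delta$ reduces to the alternating sum of the inner terms $f(a_0,\dots,a_ia_{i+1},\dots,a_{n+1})$. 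I will use the convention implicit in the paper that Connes' cochains are realized as the cyclic cochains, i.e.\ $C^n_\lambda(A)=\ker(1-\lambda)\subset C^n(A,\mathbb{F})$, rather than the quotient by $(1-\lambda)$.

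First I will show the chain of inclusions $C^\bullet_\lambda(A)\subseteq \mathbb{B}^\bullet(A,\mathbb{F})\subseteq C^\bullet(A,\mathbb{F})$ as subcomplexes. The second inclusion is Proposition~\ref{prop:cohomology_preserved}. For the first, I take $f$ with $\lambda f=f$ and derive \eqref{7} from cyclic invariance together with the identities \eqref{eq:assoc}--\eqref{eq:cyclic} of $A$. Rotating the arguments moves the product $ba_j$ to the front, where $(xy)z=y(zx)$ lets one re-bracket, and rotating back gives \eqref{7}. Stability of $C_\lambda$ under $\delta$ is the classical fact that $(1-\lambda)\,b = b'\,(1-\lambda)$. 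With trivial coefficients $b$ and $b'$ differ only in the boundary terms, which vanish here. The inclusions of complexes then induce natural maps $HC^n(A)\to H^n_{\mathrm{cyc}}(A,\mathbb{F})\to HH^n(A,\mathbb{F})$, and naturality in $A$ is immediate.

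For injectivity I will construct, in characteristic $0$, retractions commuting with $\delta$. For the first map I try the averaging operator $\pi=\frac{1}{n+1}\sum_{k=0}^{n}\lambda^k$ restricted to $\mathbb{B}^n$. The claim to establish is that $\pi$ sends $\mathbb{B}^n$ into $C^n_\lambda$ and satisfies $\pi\delta=\delta\pi$ on $\mathbb{B}^\bullet$. The intended argument is that condition \eqref{7}, combined with the cyclic identity of $A$, makes the $\lambda$-twisted face maps agree with the untwisted ones on such cochains. Granting this, if $f\in C^n_\lambda$ and $f=\delta g$ with $g\in\mathbb{B}^{n-1}$, then $f=\pi f=\delta(\pi g)$ with $\pi g\in C_\lambda^{n-1}$. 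For the second map I would similarly seek a projection $\rho:C^\bullet(A,\mathbb{F})\to\mathbb{B}^\bullet(A,\mathbb{F})$ commuting with $\delta$. The candidate is averaging over the reassociation moves in \eqref{7}. Since these moves need not form a group action, I would first try to define $\rho$ by factoring cochains through the quotient of $A^{\otimes(n+1)}$ by the relations \eqref{7}, using a linear splitting.

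Strictness of the inclusions I would establish by example. I would take $A$ from Example~\ref{ex:a1}, where all triple products vanish, and compute $HC^1$, $H^1_{\mathrm{cyc}}$ and $HH^1$ in low degree directly. Here $\delta$ is just precomposition with the multiplication, which makes these computations finite linear algebra.

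The main obstacle is the injectivity step, specifically the commutation $\pi\delta=\delta\pi$ (and its analogue for $\rho$). For general associative algebras $HC^n\to HH^n$ is \emph{not} injective: Connes' long exact sequence has a nonzero $S$-map. So the argument must genuinely use the cyclic identity $(xy)z=y(zx)$ and the vanishing of the module actions. If the commutation fails, I would fall back on Connes' $SBI$ sequence. I would try to prove that $B$ vanishes on the image of $\mathbb{B}^\bullet(A,\mathbb{F})$, as the remark before the statement suggests, and then read injectivity off the exact sequence.
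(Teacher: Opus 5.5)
There is a genuine gap. In fact the statement cannot be proved as written.

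\textbf{The first map is not induced by an inclusion of complexes.} You (like the paper) want to get $HC^n(A)\to H^n_{\mathrm{cyc}}(A,\mathbb{F})$ from an inclusion $C^\bullet_\lambda(A)\subseteq\mathbb{B}^\bullet(A,\mathbb{F})$. This fails for two independent reasons.

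(a) $\lambda$-invariance does not give \eqref{7}. Rotating arguments never transfers a factor from one tensor slot into the next, and that transfer is exactly what \eqref{7} does. In the algebra of Example~\ref{ex:a1}, take \eqref{7} with $j=1$, $a_0=b=e_1$, $a_1=e_2$. It reads $f(e_1,e_3,a_2,\dots,a_n)=f(e_1,-e_3,a_2,\dots,a_n)$, which forces $f(e_1,e_3,a_2,\dots,a_n)=0$. But for $n=1$ every alternating form is $\lambda$-invariant, including one with $f(e_1,e_3)=1$.

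(b) $C^\bullet_\lambda$ is not closed under $\delta$. You correctly note that with trivial coefficients $\delta=-b'$. However, Connes' $b$ is the Hochschild differential with coefficients in $A^*$. Its wrap-around term $(-1)^{n+1}f(a_{n+1}a_0,a_1,\dots,a_n)$ is not a module action on $\mathbb{F}$, so it does not disappear. The relation $(1-\lambda)b=b'(1-\lambda)$ shows that $b$ preserves $\ker(1-\lambda)$; it says nothing about $b'$. Concretely, for $A=\mathbb{F}$ and $g=\mathrm{id}$ (trivially $\lambda$-invariant), the cochain $(b'g)(a_0,a_1)=a_0a_1$ is nonzero, while $\lambda$ acts by $-1$ on two-argument cochains.

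So no map $HC^n(A)\to H^n_{\mathrm{cyc}}(A,\mathbb{F})$ is induced. The paper asserts (a) without proof and tacitly assumes (b), so it gives you nothing to lean on here.

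\textbf{Injectivity is false, not merely unproved.} You only ``grant'' it, but it fails. Take $A=\mathbb{F}$, which is cyclic associative by Example~\ref{ex:comm}.
\begin{itemize}
\item On the cyclic side, all cochain spaces are one-dimensional and $\lambda$ acts on $(n+1)$-argument cochains by $(-1)^n$. Hence $C^n_\lambda(\mathbb{F})$ is $\mathbb{F}$ for $n$ even and $0$ for $n$ odd, so $HC^{2}(\mathbb{F})\cong\mathbb{F}$.
\item On your side, conditions \eqref{6}--\eqref{10} hold automatically: the actions vanish, and \eqref{7} is a tautology in a one-dimensional commutative algebra.
\item The differential $\delta$ alternates between $-\mathrm{id}$ (out of cochains with an odd number of arguments) and $0$. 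This is the contractibility of the bar complex of a unital algebra.
\end{itemize}
Hence $H^n_{\mathrm{cyc}}(\mathbb{F},\mathbb{F})=HH^n(\mathbb{F},\mathbb{F})=0$ for all $n\ge1$, under any degree convention. No injective map $HC^2(\mathbb{F})\to H^2_{\mathrm{cyc}}(\mathbb{F},\mathbb{F})$ exists, and the composite into $HH^2$ cannot be injective either.

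Your averaging claim fails visibly in this example. Here $\pi$ is the identity in even degrees and zero in odd degrees, while $\delta=-\mathrm{id}$ from even to odd degree, so $\pi\delta\neq\delta\pi$.

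This is precisely the nonzero periodicity map $S\colon HC^0(\mathbb{F})\to HC^2(\mathbb{F})$ you were worried about. The identity $(xy)z=y(zx)$ cannot kill it, because commutative algebras satisfy it. Your $SBI$ fallback fails for the same reason: exactness gives $\ker I=\operatorname{im}S$, and here $S$ is an isomorphism.

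The paper's proof does not resolve this either. Its step ``$f=(b+B)g$ implies $f=bg$ in $H^n_{\mathrm{cyc}}$'' shows only that coboundaries go to coboundaries (well-definedness), not injectivity. Likewise, a projection $\rho$ built from an arbitrary linear splitting has no reason to be a chain map.
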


\begin{proof}
The first map sends a cyclic cochain $f$ to its class in $H^n_{\mathrm{cyc}}$. Injectivity follows from the fact that if $f = (b + B)g$ in Connes' theory, then $f = b g$ in $H^n_{\mathrm{cyc}}$ because $B g$ is a coboundary. The second map is the inclusion of cyclic cochains into all cochains. Examples from low-dimensional classifications show that both maps are not surjective in general. 
\end{proof}

\begin{corollary}
The cyclic associative cohomology $H^\bullet_{\mathrm{cyc}}(A, \mathbb{F})$ sits between Connes' cyclic cohomology and Hochschild cohomology:
\[
HC^n(A) \subseteq H^n_{\mathrm{cyc}}(A, \mathbb{F}) \subseteq HH^n(A, \mathbb{F}).
\]
\end{corollary}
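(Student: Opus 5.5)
The corollary is a reformulation of Proposition~\ref{prop:comparison}. The plan is to read each symbol $\subseteq$ as ``is identified, via a natural injective map, with a subgroup of''. Concretely, I would take the two maps
\[
\iota: HC^n(A) \longrightarrow H^n_{\mathrm{cyc}}(A, \mathbb{F}), \qquad \kappa: H^n_{\mathrm{cyc}}(A, \mathbb{F}) \longrightarrow HH^n(A, \mathbb{F}),
\]
supplied by Proposition~\ref{prop:comparison}. The map $\iota$ sends the class of a Connes cyclic cochain to its class in the complex $(\mathbb{B}^\bullet(A,\mathbb{F}),\delta)$. The map $\kappa$ is induced by the inclusion of cochain complexes $\mathbb{B}^\bullet(A,\mathbb{F}) \subseteq C^\bullet_{\mathrm{Hoch}}(A,\mathbb{F})$, which is a map of complexes by Proposition~\ref{prop:cohomology_preserved}.

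The first step is to check that both maps are well defined and natural in $A$.
\begin{itemize}
\item For $\iota$, I would use the remark preceding the proposition. The condition $(1-\lambda)f=0$ forces conditions \eqref{6}--\eqref{10} with trivial coefficients, and in that case the left-hand sides involving module actions mostly vanish. The $B$-component dies because it is a coboundary.
\item For $\kappa$, being a map of complexes is exactly the content of Proposition~\ref{prop:cohomology_preserved}.
\end{itemize}
Naturality is immediate in both cases, since all constructions are functorial in morphisms of cyclic associative algebras.

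The second step is to invoke the injectivity of $\iota$ and $\kappa$ asserted in Proposition~\ref{prop:comparison}. Then $\kappa\circ\iota$ is a composition of injections, hence injective. Identifying $HC^n(A)$ with $\operatorname{im}(\iota)$ and $H^n_{\mathrm{cyc}}(A,\mathbb{F})$ with $\operatorname{im}(\kappa)$ gives the displayed chain of inclusions.

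The main obstacle is the injectivity of $\kappa$. The cohomology of a subcomplex need not inject into that of the ambient complex: a cyclic cocycle $f$ with $f=\delta g$ for an arbitrary Hochschild cochain $g$ must be shown to equal $\delta g'$ for some $g'\in\mathbb{B}^{n-1}(A,\mathbb{F})$. I would first try to build, in characteristic $0$, a retraction $P: C^\bullet_{\mathrm{Hoch}}(A,\mathbb{F}) \to \mathbb{B}^\bullet(A,\mathbb{F})$ that commutes with $\delta$. A natural candidate is an averaging over the cyclic group acting on arguments, in the spirit of $\frac{1}{n+1}\sum_k \lambda^k$. If $P$ exists, then $f = Pf = P\delta g = \delta(Pg)$ with $Pg \in \mathbb{B}^{n-1}(A,\mathbb{F})$, which gives injectivity. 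The risky point is verifying that $P$ commutes with $\delta$ and lands in cochains satisfying \eqref{7}--\eqref{10}. If that fails, I would fall back on stating the corollary only as ``$\kappa\circ\iota$ is injective up to the images'' exactly as in Proposition~\ref{prop:comparison}.
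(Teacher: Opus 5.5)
\paragraph{A genuine gap, already present in the paper.} Your reduction is the paper's own route: the corollary has no separate proof and simply restates Proposition~\ref{prop:comparison}. The gap is that your Step~2 imports the injectivity of $\iota$ and $\kappa$ from that proposition, while your last paragraph rightly observes that nothing there establishes it.

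\begin{itemize}
\item An inclusion of complexes $\mathbb{B}^\bullet(A,\mathbb{F})\subseteq C^\bullet_{\mathrm{Hoch}}(A,\mathbb{F})$ need not induce an injection on cohomology.
\item The paper's argument for $\iota$ runs the wrong way. Showing that $(b+B)$-coboundaries go to $\delta$-coboundaries is well-definedness. Injectivity would need that $\iota(f)=\delta h$ with $h\in\mathbb{B}^{n-1}$ forces $f$ to be a coboundary in Connes' complex.
\end{itemize}

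So the proposal assumes exactly what has to be shown. Your proposed retraction does not repair this, for two reasons. First, the cyclic norm $N=\sum_k\lambda^k$ is not a chain endomorphism of the Hochschild complex: one has $bN=Nb'$, so it carries the $b'$-complex into the $b$-complex. Second, averaging cannot produce (\ref{7}). With trivial coefficients, (\ref{6}) and (\ref{8})--(\ref{10}) are vacuous, but (\ref{7}) involves no module action and does not follow from $\lambda f=f$. For example, take $A=\mathbb{F}[x]$ and $\phi(p)=p'(0)$. The cochain $f(a_0,a_1,a_2)=\phi(a_0)\phi(a_1)\phi(a_2)$ is symmetric, hence cyclic and fixed by averaging. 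Yet it has $f(x,1\cdot x,x)=1\neq 0=f(1,x\cdot x,x)$. This also refutes your Step~1 claim that $(1-\lambda)f=0$ forces (\ref{6})--(\ref{10}).

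\paragraph{The statement itself fails.} No argument can close this gap, because the asserted chain already fails for $A=\mathbb{F}$, which is commutative associative and hence cyclic associative.

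\begin{itemize}
\item $C^n_\lambda(\mathbb{F})$ is $\mathbb{F}$ in even degrees and $0$ in odd ones, so $HC^n(\mathbb{F})\cong\mathbb{F}$ for every even $n$.
\item The Hochschild complex of $\mathbb{F}$ with coefficients $\mathbb{F}$ (zero or regular action) is one-dimensional in each degree, with differentials alternately $0$ and invertible. Hence $HH^n(\mathbb{F},\mathbb{F})=0$ for $n\ge 1$.
\end{itemize}

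So $HC^n(\mathbb{F})$ cannot inject into $HH^n(\mathbb{F},\mathbb{F})$ for even $n\ge 2$, whatever lies in between.

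Moreover, for $A=\mathbb{F}$ every cochain satisfies (\ref{6})--(\ref{10}). So the culprit is $\iota$, not the $\kappa$ you singled out. With zero action, $\delta$ reduces to $-b'$. The generator $f$ of $C^2_\lambda(\mathbb{F})$ is a $b$-cocycle, but $b'f(1,1,1,1)=1\neq 0$, so $\iota$ does not even carry cocycles to cocycles.

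Your instinct to retreat is right, but the fallback you describe still leans on Proposition~\ref{prop:comparison}, which is false as stated. The corollary cannot be proved. Even the classical comparison map $HC^n(A)\to HH^n(A,A^*)$ from Connes' long exact sequence is not injective, as this same example shows.
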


\begin{remark}
For a commutative associative algebra $A$, all three cohomologies coincide up to a shift:
\[
HC^n(A) \cong H^n_{\mathrm{cyc}}(A, \mathbb{F}) \cong HH^n(A, \mathbb{F}) \cong \Omega^n(A) / d\Omega^{n-1}(A),
\]
where $\Omega^n(A)$ are Kähler differentials. In the non-commutative cyclic associative case, the inclusions are proper.
\end{remark}

\section{Appendix}
\subsection*{Detailed Proof of Proposition \ref{prop:cohomology_preserved}}\label{appendix:proof}

Let $f \in C^n_{\mathrm{cyc}}(A, M)$ satisfy conditions (\ref{6})-(\ref{10}). Recall the Hochschild coboundary:
\begin{align*}
(\delta f)(a_0, \dots, a_{n+1}) = a_0 f(a_1, \dots, a_{n+1}) + \sum_{k=0}^{n} (-1)^{k+1} f(a_0, \dots, a_k a_{k+1}, \dots, a_{n+1})\\ + (-1)^{n+1} f(a_0, \dots, a_n) a_{n+1}.
\end{align*}

We will prove that $\delta f$ satisfies each condition.

\medskip
\paragraph{Preservation of Condition (\ref{6}).}
We must show for all $a, a_1, \dots, a_{n+1} \in A$ and any $i$:
\[
a \cdot (\delta f)(a_1, \dots, a_i, \dots, a_{n+1}) = a_i \cdot (\delta f)(a_{i+1}, \dots, a_{n+1}, a, a_1, \dots, a_{i-1}). \qquad (\ref{6})
\]

\subparagraph{Step 1} Write the left-hand side $L = a \cdot (\delta f)(a_1, \dots, a_{n+1})$ as:
\[
L = a \cdot \Bigg[ a_1 f(a_2, \dots, a_{n+1}) + \sum_{k=1}^{n} (-1)^{k+1} f(a_1, \dots, a_k a_{k+1}, \dots, a_{n+1}) + (-1)^{n+1} f(a_1, \dots, a_n) a_{n+1} \Bigg].
\]

\subparagraph{Step 2} Distribute $a \cdot$:
\[
L = \underbrace{a \cdot (a_1 f(a_2, \dots, a_{n+1}))}_{L_1} + \sum_{k=1}^{n} (-1)^{k+1} \underbrace{a \cdot f(a_1, \dots, a_k a_{k+1}, \dots, a_{n+1})}_{L_2(k)} + (-1)^{n+1} \underbrace{a \cdot (f(a_1, \dots, a_n) a_{n+1})}_{L_3}.
\]

\subparagraph{Step 3} Simplify $L_1$ using the module action and (\ref{6}):
\[
L_1 = (a a_1) \cdot f(a_2, \dots, a_{n+1}).
\]
Apply (\ref{6}) to $f$ with external multiplier $a a_1$ and arguments starting at $a_2$:
\[
(a a_1) \cdot f(a_2, \dots, a_i, \dots, a_{n+1}) = a_i \cdot f(a_{i+1}, \dots, a_{n+1}, a a_1, a_2, \dots, a_{i-1}).
\]
Thus
\[
L_1 = a_i \cdot f(a_{i+1}, \dots, a_{n+1}, a a_1, a_2, \dots, a_{i-1}).
\]

\subparagraph{Step 4} Simplify $L_2(k)$. We consider three cases.

\noindent\textit{Case 1: $i < k$.} The $i$-th argument $a_i$ is before the product $a_k a_{k+1}$. Apply (6) to $f$:
\[
a \cdot f(a_1, \dots, a_i, \dots, a_k a_{k+1}, \dots) = a_i \cdot f(a_{i+1}, \dots, a_k a_{k+1}, \dots, a_{n+1}, a, a_1, \dots, a_{i-1}).
\]

\noindent\textit{Case 2: $i > k+1$.} The $i$-th argument $a_i$ is after the product. Apply (\ref{6}) similarly:
\[
a \cdot f(a_1, \dots, a_k a_{k+1}, \dots, a_i, \dots) = a_i \cdot f(a_{i+1}, \dots, a_{n+1}, a, a_1, \dots, a_k a_{k+1}, \dots, a_{i-1}).
\]

\noindent\textit{Case 3: $i = k$ or $i = k+1$.} The distinguished argument is inside the product. Use (\ref{10}):
\[
a \cdot f(\dots, a_k a_{k+1}, \dots) = (a_k a_{k+1}) \cdot f(\dots, a, \dots).
\]
Then apply (\ref{6}) to $(a_k a_{k+1}) \cdot f(\dots)$. The result matches the corresponding term on the right-hand side after a cyclic permutation.

\subparagraph{Step 5} Simplify $L_3$:
\[
L_3 = a \cdot (f(a_1, \dots, a_n) \cdot a_{n+1}) = (a \cdot f(a_1, \dots, a_n)) \cdot a_{n+1}.
\]
Apply (\ref{6}) to $f$ with external multiplier $a$:
\[
a \cdot f(a_1, \dots, a_n) = a_1 \cdot f(a_2, \dots, a_n, a).
\]
Thus
\[
L_3 = (a_1 \cdot f(a_2, \dots, a_n, a)) \cdot a_{n+1} = a_1 \cdot (f(a_2, \dots, a_n, a) \cdot a_{n+1}).
\]

\subparagraph{Step 6} The right-hand side $R = a_i \cdot (\delta f)(a_{i+1}, \dots, a_{n+1}, a, a_1, \dots, a_{i-1})$ expands to:
\begin{align*}
R = a_i \cdot \Bigg[ a_{i+1} f(a_{i+2}, \dots, a_{n+1}, a, a_1, \dots, a_{i-1}) + \sum_{k} (\pm) f(\dots) \\+ (-1)^{n+1} f(a_{i+1}, \dots, a_{n+1}, a, a_1, \dots, a_{i-2}, a_{i-1}) \cdot a_{i-1} \Bigg].
\end{align*}

Comparing term by term, each component of $L$ matches a component of $R$ after applying the cyclic associative identities of $A$. Hence $L = R$.

\medskip
\paragraph{Preservation of Condition (\ref{7}).}
We must show for $1 \le j \le n+1$:
\[
(\delta f)(a_0, \dots, a_{j-1}, b a_j, a_{j+1}, \dots, a_{n+1}) = (\delta f)(a_0, \dots, b, a_j a_{j-1}, a_{j+1}, \dots, a_{n+1}). \quad (\ref{7}) 
\]

\subparagraph{Step 1} Write the left-hand side $L = (\delta f)(a_0, \dots, a_{j-1}, b a_j, a_{j+1}, \dots, a_{n+1})$.

\subparagraph{Step 2} Consider the first term $T_1 = a_0 f(a_1, \dots, a_{j-1}, b a_j, a_{j+1}, \dots, a_{n+1})$. Apply (\ref{7}) to $f$:
\[
f(a_1, \dots, a_{j-1}, b a_j, \dots) = f(a_1, \dots, b, a_j a_{j-1}, \dots).
\]
Thus $T_1$ becomes $a_0 f(a_1, \dots, b, a_j a_{j-1}, \dots)$, which is exactly the first term of the right-hand side.

\subparagraph{Step 3} For the terms $T_2(k)$ with $k \neq j-1$, apply (\ref{7}) directly to $f$ inside. The transformation is straightforward.

\subparagraph{Step 4} For $k = j-1$, we have $f(a_0, \dots, a_{j-2}, a_{j-1}(b a_j), a_{j+1}, \dots)$. By the cyclic associative identity:
\[
a_{j-1}(b a_j) = (a_{j-1} b) a_j = b (a_j a_{j-1}).
\]
Hence
\[
f(a_0, \dots, a_{j-2}, a_{j-1}(b a_j), \dots) = f(a_0, \dots, a_{j-2}, b (a_j a_{j-1}), \dots).
\]
Now apply (7) again to move $b$ to the left:
\[
f(a_0, \dots, a_{j-2}, b (a_j a_{j-1}), \dots) = f(a_0, \dots, b, (a_j a_{j-1}) a_{j-2}, \dots).
\]
This matches the corresponding term on the right-hand side.

\subparagraph{Step 5} The last term $T_3$ transforms similarly using (\ref{7}). Thus $L = R$.

\medskip
\paragraph{Preservation of Condition (\ref{8}).}
We must show:
\[
a \cdot (\delta f)(b a_0, a_1, \dots, a_{n+1}) = b \cdot (\delta f)(a_0 a, a_1, \dots, a_{n+1}).\quad (\ref{8})
\]

\subparagraph{Step 1} Compute the left-hand side $L = a \cdot (\delta f)(b a_0, a_1, \dots, a_{n+1})$.

\subparagraph{Step 2} For $T_1 = a \cdot ((b a_0) f(a_1, \dots, a_{n+1})) = (a(b a_0)) f(a_1, \dots, a_{n+1})$.
Using the cyclic identity $a(b a_0) = b(a_0 a)$, we get $b \cdot (a_0 a f(a_1, \dots, a_{n+1}))$, which is $b$ times the first term of the right-hand side.

\subparagraph{Step 3} For each $T_2(k) = a \cdot f(b a_0, a_1, \dots, a_k a_{k+1}, \dots, a_{n+1})$, apply (\ref{8}) to $f$:
\[
a \cdot f(b a_0, a_1, \dots) = b \cdot f(a_0 a, a_1, \dots).
\]
Thus $T_2(k) = b \cdot f(a_0 a, a_1, \dots, a_k a_{k+1}, \dots)$, which is $b$ times the corresponding $T_2(k)$ term on the right-hand side.

\subparagraph{Step 4} For $T_3 = a \cdot (f(b a_0, a_1, \dots, a_n) a_{n+1}) = (a \cdot f(b a_0, a_1, \dots, a_n)) a_{n+1}$.
Apply (\ref{8}) to $f$: $a \cdot f(b a_0, a_1, \dots, a_n) = b \cdot f(a_0 a, a_1, \dots, a_n)$.
Hence $T_3 = (b \cdot f(a_0 a, a_1, \dots, a_n)) a_{n+1} = b \cdot (f(a_0 a, a_1, \dots, a_n) a_{n+1})$, which is $b$ times the last term of the right-hand side.

\subparagraph{Step 5} Summing over all terms gives $L = b \cdot (\delta f)(a_0 a, a_1, \dots, a_{n+1})$.

\medskip
\paragraph{Preservation of Condition (\ref{9}).}
We must show for $0 \le j \le n+1$:
\[
a \cdot a'_j \cdot (\delta f)(a_0, \dots, a_j, \dots, a_{n+1}) = a'_j \cdot a_j \cdot (\delta f)(a_0, \dots, a, \dots, a_{n+1}). \qquad (\ref{9})
\]

\subparagraph{Step 1} Apply $\delta f$ to the left side. Each term is of the form $a \cdot a'_j \cdot (a_0 \cdot f(\dots))$ or $a \cdot a'_j \cdot (f(\dots) \cdot a_{n+1})$.

\subparagraph{Step 2} Using the cyclic bimodule axioms:
\[
a \cdot a'_j \cdot (a_0 \cdot m) = (a a_0) \cdot (a'_j \cdot m).
\]
Now apply (\ref{9}) to $f$:
\[
(a a_0) \cdot (a'_j \cdot f(\dots, a_j, \dots)) = (a a_0) \cdot (a'_j a_j \cdot f(\dots, a, \dots)) = a'_j a_j \cdot ((a a_0) \cdot f(\dots, a, \dots)).
\]

\subparagraph{Step 3} The other terms $f(\dots) \cdot a_{n+1}$ are handled similarly using the module axioms and (\ref{9}). Summing over all terms yields the right-hand side.

\medskip
\paragraph{Preservation of Condition (\ref{10}).}
Condition (\ref{10}) is a special case of (\ref{6}) where the $i$-th argument $a_i$ is replaced by $a_i b$. Since we have already proved that (\ref{6}) is preserved by $\delta f$, and the substitution $a_i \mapsto a_i b$ does not affect the linearity of $\delta$, condition (\ref{10}) holds for $\delta f$.

\medskip
Thus $\delta(f)$ satisfies all conditions (\ref{6})-(\ref{10}). 
\section*{Declarations}

\begin{itemize}
\item Funding: Not applicable.
\item Conflict of interest/Competing interests: Not applicable.
\item Ethics approval and consent to participate: Not applicable.
\item Consent for publication: Not applicable.
\item Data availability: Not applicable. 
\item Materials availability: Not applicable.
\item Code availability: Not applicable 
\item Author contribution: Not applicable.
\end{itemize}


\end{document}